\newcommand{\strutstretchdef}{\newcommand{\strutstretch}{\vphantom{\raisebox{1pt}{$\big($}\raisebox{-1pt}{$\big($}}}}
\newtheorem{theorem}{Theorem}[section]
\newtheorem{lemma}[theorem]{Lemma}
\newproof{proof}[theorem]{Proof}
\newtheorem{proposition}[theorem]{Proposition}
\newtheorem{corollary}[theorem]{Corollary}
\newtheorem{remark}[theorem]{Remark}
\numberwithin{equation}{section}
\newlength{\struh}
\newlength{\textminustop}
\renewcommand{\a}{\alpha}
\renewcommand{\b}{\beta}
\renewcommand{\phi}{\varphi}
\begin{document}

\begin{frontmatter}

\title{A New Necessary Condition \\ for the Hyponormality of Toeplitz Operators \\ on the Bergman Space}

\author{\v Zeljko \v Cu\v ckovi\'c}
\address{Department of Mathematics, University of Toledo, Toledo, Ohio 43606}
\ead{zeljko.cuckovic@utoledo.edu}
\ead[url]{http://math.utoledo.edu/\symbol{126}zcuckov/}
\author{Ra\'{u}l E. Curto \footnote{The second named author was partially supported by NSF Grants
DMS-0801168 and DMS-1302666.}}
\address{Department of Mathematics, The University of Iowa, Iowa City, Iowa
52242}
\ead{rcurto@math.uiowa.edu}
\ead[url]{http://www.math.uiowa.edu/\symbol{126}rcurto/}

\bigskip
\bigskip

\begin{abstract}
A well known result of C. Cowen states that, for a symbol $\varphi \in L^{\infty }, \; \varphi \equiv \bar{f}+g \;\;(f,g\in H^{2})$, the Toeplitz operator $T_{\varphi }$ acting on the Hardy space of the unit circle is hyponormal if and only if $f=c+T_{\bar{h}}g,$ for some $c\in {\mathbb C}$, $h\in H^{\infty }$, $\left\| h\right\| _{\infty}\leq 1.$ \ In this note we consider possible versions of this result in the {\it Bergman} space case. \ Concretely, we consider Toeplitz operators on the Bergman space of the unit disk, with symbols of the form $$\varphi \equiv \alpha z^n+\beta z^m +\gamma \overline z ^p + \delta \overline z ^q,$$ where $\alpha, \beta, \gamma, \delta \in \mathbb{C}$ and $m,n,p,q \in \mathbb{Z}_+$, $m < n$ and $p < q$. \ By letting $T_{\varphi}$ act on vectors of the form $$z^k+c z^{\ell}+d z^r \; \; (k<\ell<r),$$ we study the asymptotic behavior of a suitable matrix of inner products, as $k \rightarrow \infty$. \ As a result, we obtain a sharp inequality involving the above mentioned data:
$$
\left|\alpha \right|^2 n^2 + \left|\beta \right|^2 m^2 - \left|\gamma \right|^2 p^2 - \left|\delta \right|^2 q^2  \ge 2 \left|\bar \alpha \beta m n - \bar \gamma \delta p q \right|.
$$
This inequality improves a number of existing results, and it is intended to be a precursor of basic necessary conditions for joint hyponormality of tuples of Toeplitz operators acting on Bergman spaces in one or several complex variables.  

\bigskip
\bigskip
\bigskip

\end{abstract}

\bigskip
\bigskip

\begin{keyword}
hyponormality, Toeplitz operators, Bergman space, commutators

\medskip

\textit{2010 Mathematics Subject Classification} \ Primary: 47B35, 47B20, 32A36; \ Secondary: 47B36, 47B15, 47B47

\medskip

\end{keyword}

\end{frontmatter}


\section{Notation and Preliminaries}
\label{Intro}

A bounded operator acting on a complex, separable, infinite dimensional Hilbert space $\mathcal{H}$ is said to be normal if $T^{\ast }T=TT^{\ast }$; quasinormal if $T$ commutes with $T^{\ast }T$; subnormal if $T=N|_{\mathcal H}$, where $N$ is normal on a Hilbert space $\mathcal{K}$ which contains $\mathcal{H}$ and $N\mathcal H\subseteq \mathcal H$; hyponormal if $T^{\ast }T\geq TT^{\ast }$; and $2$-hyponormal if $(T,T^{2})$ is (jointly) hyponormal, that is 
$$
\left( 
\begin{array}{ccc}
\lbrack T^{\ast },T] & [T^{\ast 2},T] \\ 
\lbrack T^{\ast },T^{2}] & [T^{\ast 2},T^{2}]%
\end{array}%
\right) \geq 0. 
$$
Clearly,  
$$
\textrm{normal } \Rightarrow \textrm{quasinormal } \Rightarrow \textrm{subnormal } \Rightarrow 2\textrm{-hyponormal } \Rightarrow \textrm{hyponormal}.
$$
In this paper we focus primarily on the cases $H^{2}(\mathbb T)$ and $A^2(\mathbb{D})$, the Hardy space on the unit circle $\mathbb{T}$ and the Bergman space on the unit disk $\mathbb{D}$, respectively. \ For these Hilbert spaces, we look at Toeplitz operators, that is, the operators obtained by compressing multiplication operators on the respective $L^2$ spaces to the above mentioned Hilbert spaces. \ We consider possible versions, in the Bergman space context, of C. Cowen's characterization of hyponormality for Toeplitz operators on Hardy space of the unit circle. \ Concretely, we consider Toeplitz operators on the Bergman space of the unit disk, with symbols of the form $$\varphi \equiv \alpha z^n+\beta z^m +\gamma \overline z ^p + \delta \overline z ^q,$$ where $\alpha, \beta, \gamma, \delta \in \mathbb{C}$ and $m,n,p,q \in \mathbb{Z}_+$, $m < n$ and $p < q$. \ By letting $T_{\varphi}$ act on vectors of the form $$z^k+c z^{\ell}+d z^r \; \; (k<\ell<r),$$ we study the asymptotic behavior of a suitable matrix of inner products, as $k \rightarrow \infty$. \ As a result, we obtain a sharp inequality involving the above mentioned data. \ We begin with a brief survey of the known results in the Hardy space context. 

\section{The Hardy Space Case}

Let $L^2(\mathbb{T})$ denote the space of square integrable functions with respect to the Lebesgue measure on the unit circle, and let $H^2(\mathbb{T})$ denote the subspace consisting of functions with vanishing negative Fourier coefficients; equivalently, $H^2(\mathbb{T})$ is the $L^2(\mathbb{T})$-closure of the space of analytic polynomials. \ We also let $L^{\infty}(\mathbb{T})$ and $H^{\infty}(\mathbb{T})$ denote the corresponding Banach spaces of essentially bounded functions on $\mathbb{T}$. \ The orthogonal projection from $L^2(\mathbb{T})$ onto $H^2(\mathbb{T})$ will be denoted by $P$. 

Given $\varphi \in L^{\infty }(\mathbb{T})$, the Toeplitz operator with \textit{symbol} $\varphi $ is $T_{\varphi }:H^{2}(\mathbb{T}) \rightarrow H^{2}(\mathbb{T})$, given by $T_{\varphi }f:=P(\varphi f)\;\;\;(f\in H^{2}(\mathbb{T}))$. \ $T_{\varphi }$ is said to be \textit{analytic} if $\varphi \in H^{\infty }(\mathbb{T})$. \ 

P.R. Halmos's Problem 5 (\cite{Hal1}) asks whether every subnormal Toeplitz operator is either normal or analytic. \ In 1984, C. Cowen and J. Long answered this question in the negative \cite{CoL}. \ Along the way, C. Cowen obtained a characterization of hyponormality for Toeplitz operators, as follows \cite{Co}: if $\varphi \in L^{\infty }$, $\varphi =\bar{f}+g\;\;(f,g\in H^{2})$, then $
T_{\varphi }$ is hyponormal $\Leftrightarrow \ f=c+T_{\bar{h}}g$, for some $c\in {\mathbb C}$, $h\in H^{\infty }$, and $\left\| h\right\| _{\infty
}\leq 1$. \ T. Nakazi and K. Takahashi \cite{NT} later found an alternative description: For $\varphi \in L^{\infty }$, let $\mathcal E(\varphi ):=\{k\in H^{\infty }:\left\| k\right\| _{\infty }\leq 1\
and\ \varphi -k\bar{\varphi}\in H^{\infty }\}$; then $T_{\varphi }$ is hyponormal $\Leftrightarrow \ {\mathcal E}(\varphi )\neq
\emptyset$. \ (For a generalization of Cowen's result, see \cite{Gu}.) \ In this note we take a first step toward finding suitable generalizations of these results to the case of Toeplitz operators on the Bergman space over the unit disk. \ We also wish to pursue appropriate generalizations of the results on joint hyponormality of pairs of Toeplitz operators on the Hardy space, obtained in \cite{CL1} and \cite{CHL}.

At present, there is no known characterization of subnormality of Toeplitz operators in the unit circle in terms of the symbol. \ However, we do know that every $2$-hyponormal Toeplitz operator with a \textit{trigonometric} symbol is subnormal \cite{CL1}. \ Thus, a suitable intermediate goal is to find a characterization of $2$-hyponormality in terms of the symbol, perhaps using as a starting point either Cowen's or Nakazi-Takahashi's characterizations of hyponormality.

For Toeplitz operators with trigonometric symbols, the following results describe hyponormality.

\begin{proposition} \label{Zhutrig} \cite{Zhu} \ Suppose 
$$
\varphi(z) \equiv \sum_{k=0}^n a_kz^k + \overline{\sum_{k=0}^n b_kz^k},
$$
with $a_n \ne 0$. \ Let
$$
\begin{pmatrix} \bar{c}_0\\\bar{c}_1\\  
\vdots\\ \bar{c}_{n-1}
\end{pmatrix}=
\begin{pmatrix} a_1 & a_2 & \cdots & a_{n-1} & a_n\\\
                a_2 & a_3 & \cdots & a_n & 0 \\
								\vdots & \vdots & \ddots & \vdots & \vdots \\
								a_n & 0 & \cdots & 0 & 0 
\end{pmatrix}
\begin{pmatrix} b_1\\b_2\\ \vdots\\ b_n
\end{pmatrix} . 
$$
Then $T_{\varphi}$ is hyponormal if and only if $\left|\Phi_k(c_0,\cdots,c_k)\right|\le1 \; \; (0 \le k \le n-1)$, where $\Phi_k$ denotes the Schur function introduced in \cite{Schur}.
\end{proposition}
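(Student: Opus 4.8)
The plan is to derive Proposition~\ref{Zhutrig} from Cowen's characterization of hyponormality \cite{Co} together with I. Schur's classical solution of the coefficient interpolation problem \cite{Schur}. Put $g(z):=\sum_{k=0}^n a_k z^k$ and $f(z):=\sum_{k=0}^n b_k z^k$, so that $\varphi=\bar f+g$ with $f,g\in H^2(\mathbb{T})$; Cowen's theorem then asserts that $T_\varphi$ is hyponormal if and only if there exist $c\in\mathbb{C}$ and $h\in H^\infty$ with $\|h\|_\infty\le1$ such that $f=c+T_{\bar h}g$. First I would unwind the right-hand side at the level of Taylor coefficients: writing $h(z)=\sum_{j\ge0}c_jz^j$ and $\bar h=\sum_{j\ge0}\overline{c_j}z^{-j}$ on $\mathbb{T}$, one finds that the $\ell$-th Taylor coefficient of $T_{\bar h}g=P(\bar h g)$ equals $\sum_{j\ge0}\overline{c_j}\,a_{\ell+j}$, with the convention $a_i=0$ for $i>n$. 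Equating coefficients, the identity $f=c+T_{\bar h}g$ becomes the single scalar relation at $\ell=0$ (which merely defines $c$) together with the finite system
\[
b_\ell=\sum_{j=0}^{\,n-\ell}\overline{c_j}\,a_{\ell+j}\qquad(\ell=1,\dots,n),
\]
i.e. the triangular linear system encoded by the Hankel matrix displayed in the statement.

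The next step exploits the triangular shape: the equation at $\ell=n$ reads $b_n=a_n\overline{c_0}$, the one at $\ell=n-1$ then fixes $\overline{c_1}$, and so on, so that — since $a_n\ne0$ — back-substitution uniquely determines $c_0,\dots,c_{n-1}$. Crucially, none of the coefficients $c_j$ with $j\ge n$ occurs in this system, so the ``tail'' of $h$ is entirely unconstrained. Hence $T_\varphi$ is hyponormal precisely when the prescribed initial block $(c_0,\dots,c_{n-1})$, obtained from $(a_k)$ and $(b_k)$ via the displayed relation, can be completed to the Taylor expansion of some $h\in H^\infty$ with $\|h\|_\infty\le1$ — equivalently, of some Schur function. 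I would stress here that one must \emph{not} merely take $h=\sum_{j=0}^{n-1}c_jz^j$, since passing to the polynomial truncation can raise the supremum norm; the freedom in the tail is exactly what is needed.

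Finally I would appeal to Schur's algorithm: the Schur parameters $\gamma_k=\Phi_k(c_0,\dots,c_k)$ are rational functions of the first $k+1$ Taylor coefficients, and a prescribed jet $(c_0,\dots,c_{n-1})$ extends to a Schur function if and only if $|\gamma_k|\le1$ for every $k=0,\dots,n-1$, with the standard convention that if $|\gamma_k|=1$ for some $k<n-1$ then the completion is forced to be a Blaschke product of degree $k$ and the subsequent parameters are irrelevant. Since each $\Phi_k$ depends only on $c_0,\dots,c_k$, and the latter are already pinned down by the symbol data, this is exactly the stated condition $|\Phi_k(c_0,\dots,c_k)|\le1$ for $0\le k\le n-1$.

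The coefficient bookkeeping and the invertibility of the Hankel matrix (immediate from $a_n\ne0$) are routine. The points that genuinely require care are (i) recognizing that hyponormality reduces to a \emph{finite} coefficient interpolation problem rather than to the explicit construction of a bounded $h$, and (ii) handling the degenerate boundary cases of Schur's algorithm, which must be absorbed into the interpretation of the functions $\Phi_k$.
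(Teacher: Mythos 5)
The paper does not prove this Proposition; it is quoted from \cite{Zhu} as a background result, so there is no in-text argument to compare yours against. Your reconstruction is, in substance, Zhu's own proof: reduce via Cowen's theorem \cite{Co} to the existence of $h\in H^\infty$ with $\|h\|_\infty\le 1$ and $f=c+T_{\bar h}g$; observe that matching Taylor coefficients constrains only the jet $(c_0,\dots,c_{n-1})$ of $h$ (the constant-term equation merely fixes $c$, and the finite support of the $a_k$ removes every $c_j$ with $j\ge n$ from the system); then invoke Schur's solution of the finite coefficient problem \cite{Schur} to decide when that jet extends to a function in the closed unit ball of $H^\infty$. The two points you single out as delicate --- that one must allow an arbitrary tail rather than truncate $h$, and that the degenerate cases $|\Phi_k|=1$ must be absorbed into the interpretation of the later $\Phi_j$ --- are exactly the right ones, and your back-substitution using $a_n\ne0$ is correct.

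One discrepancy you should not gloss over: the system you derive is
\[
b_\ell=\sum_{j=0}^{n-\ell}\overline{c_j}\,a_{\ell+j}\qquad(\ell=1,\dots,n),
\]
that is, $A\overline{c}=b$ with $A$ the displayed Hankel matrix, whereas the Proposition as printed asserts $\overline{c}=Ab$. These are not the same relation ($A$ is not an involution), and the case $n=1$ decides which is correct: for $\varphi=a_1z+\overline{b_1z}$, hyponormality on $H^2(\mathbb{T})$ is equivalent to $|b_1|\le|a_1|$, which is $|c_0|\le1$ when $\overline{c_0}=b_1/a_1$, not when $\overline{c_0}=a_1b_1$. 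So your version (with $A^{-1}$, equivalently with the displayed equation read as a system to be solved for $\overline{c}$) is the correct one and agrees with \cite{Zhu}; the statement as printed appears to have dropped the inverse. You should say this explicitly rather than asserting that your system is ``the one displayed in the statement.'' Apart from that, the proof is complete.
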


\begin{proposition} \label{trigHardy} 
Let $\varphi$ be a
trigonometric polynomial of the form 
$$
\varphi(z)=\sum_{n=-m}^{N} a_n z^ n.
$$ 
\begin{itemize}  
\item[(i)] \ (D. Farenick and W.Y. Lee \cite{FaLe}) \ If $T_\varphi$ is a hyponormal 
operator
then $m \le N$ and $\left|a_{-m}\right| \le \left|a_N\right|$.  
\item[(ii)] \ (D. Farenick and W.Y. Lee \cite{FaLe}) \ 
If $T_\varphi$ is a hyponormal operator
then $N-m\le \text{\rm rank}\, [T_ \varphi^ *, T_\varphi]\le N$.
\item[(iii)] \ (R. Curto and W.Y. Lee \cite{CL1}) \ The hyponormality of $T_\varphi$ is
independent of the particular values of the Fourier coefficients 
$a_0,a_1, \hdots, a_{N-m}$ of $\varphi$. \ Moreover,
for $T_\varphi$ hyponormal,
the rank
of the self-commutator of $T_\varphi$ is independent of
those coefficients.
\item[(iv)] \ (D. Farenick and W.Y. Lee \cite{FaLe}) \ If $m\le N$ and $\left|a_{-m}\right|=\left|a_N\right| \ne 0$, 
then $T_\varphi$ is hyponormal if and only
if the following equation holds: 
$$
\bar a_N\
\begin{pmatrix} a_{-1}\\a_{-2}\\  
\vdots\\ \vdots\\ a_{-m}
\end{pmatrix}= a_{-m}
\begin{pmatrix} \bar 
a_{N-m+1}\\ \bar a_{N-m+2}\\ \vdots\\  \vdots\\ \bar a_N
\end{pmatrix}. \; \; \; \; ({\rm hyponormal.})
$$
In this case, the rank of $[T_\varphi ^ *, T_\varphi]$ 
is $N-m$. 
\item[(v)] (D. Farenick and W.Y. Lee \cite{FaLe}) \ 
$T_\varphi$ is normal if and only if $m=N,\ \left|a_{-N}\right|=\left|a_N\right|$, and 
{\rm (hyponormal.})
holds with $m=N$.  
\end{itemize} 
\end{proposition}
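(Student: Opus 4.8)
The plan is to reduce all five items to two standard inputs. Write $\varphi=\bar f+g$ with $g(z)=\sum_{n=0}^N a_nz^n$ and $f(z)=\sum_{n=1}^m\overline{a_{-n}}z^n$, analytic polynomials of exact degrees $N$ and $m$ (one may assume $a_N\ne0$, $a_{-m}\ne0$). First I would record the Hankel form of the self-commutator: since analytic Toeplitz operators commute, $[T_\varphi^{\,*},T_\varphi]=H_{\bar\varphi}^{\,*}H_{\bar\varphi}-H_\varphi^{\,*}H_\varphi=H_{\bar g}^{\,*}H_{\bar g}-H_{\bar f}^{\,*}H_{\bar f}$, and for an analytic polynomial $p$ of exact degree $d$ the Hankel operator $H_{\bar p}$ has rank $d$ with range $\operatorname{span}\{z^{-1},\dots,z^{-d}\}$; in particular $\operatorname{rank}H_{\bar g}^{\,*}H_{\bar g}=N$ and $\operatorname{rank}H_{\bar f}^{\,*}H_{\bar f}=m$. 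Second, by the Nakazi--Takahashi criterion \cite{NT}, $T_\varphi$ is hyponormal iff there is $k\in H^\infty$ with $\|k\|_\infty\le1$ and $\varphi-k\bar\varphi\in H^\infty$; matching negative Fourier coefficients turns this into the triangular linear system $a_{-\ell}=\sum_{j\ge0}k_j\,\overline{a_{\ell+j}}$ $(\ell\ge1)$ for the Taylor coefficients of $k$, in which only $k_0,\dots,k_{N-1}$ occur.

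For (i): the $\ell=m$ equation is $a_{-m}=\sum_{j\ge0}k_j\overline{a_{m+j}}$, whose right side is empty if $m>N$, forcing $a_{-m}=0$; hence $m\le N$. Solving the system from $\ell=N$ downward then forces $k_0=\dots=k_{N-m-1}=0$ and $k_{N-m}=a_{-m}/\overline{a_N}$, so $|a_{-m}|/|a_N|=|k_{N-m}|\le\|k\|_\infty\le1$. For (ii): if $T_\varphi$ is hyponormal then $0\le[T_\varphi^{\,*},T_\varphi]\le H_{\bar g}^{\,*}H_{\bar g}$, so $\operatorname{rank}[T_\varphi^{\,*},T_\varphi]\le N$; and $H_{\bar g}^{\,*}H_{\bar g}=[T_\varphi^{\,*},T_\varphi]+H_{\bar f}^{\,*}H_{\bar f}$ together with subadditivity of rank gives $N\le\operatorname{rank}[T_\varphi^{\,*},T_\varphi]+m$, i.e.\ $\operatorname{rank}[T_\varphi^{\,*},T_\varphi]\ge N-m$.

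For (iv): when $|a_{-m}|=|a_N|\ne0$ the computation above gives $|k_{N-m}|=1$, and since the first $N-m$ Taylor (hence Schur) coefficients of $k$ vanish, the Schur algorithm \cite{Schur} forces $k(z)=(a_{-m}/\overline{a_N})\,z^{N-m}$; so the multiplier is unique and $T_\varphi$ is hyponormal iff this $k$ satisfies the remaining equations $a_{-\ell}=k_{N-m}\overline{a_{N-m+\ell}}$ $(\ell=1,\dots,m-1)$, which after multiplication by $\overline{a_N}$ is exactly the stated vector identity; the rank being $N-m$ in this case reflects that $k$ is then a unimodular constant times $z^{N-m}$ (so the defect $I-T_kT_k^{\,*}$ has rank $N-m$), cf.\ \cite{FaLe}. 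For (v): $T_\varphi$ is normal iff $[T_\varphi^{\,*},T_\varphi]=0$ iff $H_{\bar g}^{\,*}H_{\bar g}=H_{\bar f}^{\,*}H_{\bar f}$; equality of ranks forces $m=N$, evaluating both sides at $z^{N-1}$ forces $|a_{-N}|=|a_N|$, and then part (iv) with $m=N$ yields the vector identity. Conversely, that identity (with $m=N$, $|a_{-N}|=|a_N|$) says $\varphi=a_0+\psi+e^{i\theta}\bar\psi$ for an analytic polynomial $\psi$ and a unimodular $e^{i\theta}$, whence $[T_\varphi^{\,*},T_\varphi]=[T_\psi^{\,*},T_\psi]-[T_\psi^{\,*},T_\psi]=0$.

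The one item I expect to require genuine work rather than bookkeeping is (iii)---the invariance of hyponormality, and of $\operatorname{rank}[T_\varphi^{\,*},T_\varphi]$, under changes of $a_0,\dots,a_{N-m}$. The natural starting point is that replacing $g$ by $g+p$ with $\deg p\le N-m$ alters $H_{\bar g}$ only through $H_{\bar p}$, which annihilates $z^{N-m}H^2$, so $H_{\bar g}$ and $H_{\overline{g+p}}$ agree on that cofinite subspace; promoting this ``agreement on a cofinite subspace'' to invariance of both the positivity and the exact rank of $H_{\bar g}^{\,*}H_{\bar g}-H_{\bar f}^{\,*}H_{\bar f}$ is the delicate step, and is precisely the Curto--Lee argument \cite{CL1} one is invoking here.
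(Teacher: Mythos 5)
The paper does not prove Proposition \ref{trigHardy} at all: it is quoted as background, with the burden of proof carried entirely by the citations to \cite{FaLe} and \cite{CL1}. So there is no in-paper argument to compare against; what you have written is a reconstruction, and it follows what is essentially the standard route in those references. The two inputs you isolate are the right ones: the factorization $[T_\varphi^{\,*},T_\varphi]=H_{\bar g}^{\,*}H_{\bar g}-H_{\bar f}^{\,*}H_{\bar f}$ plus Kronecker's rank theorem gives (ii) exactly as you say, and unwinding the Nakazi--Takahashi criterion \cite{NT} into the triangular system $a_{-\ell}=\sum_{j}k_j\overline{a_{\ell+j}}$ correctly yields (i), the equivalence in (iv), and the characterization in (v). One small simplification: in (iv), once $k_0=\dots=k_{N-m-1}=0$ and $|k_{N-m}|=\|k\|_\infty=1$, the conclusion $k=k_{N-m}z^{N-m}$ is just the maximum modulus principle applied to $k/z^{N-m}$; invoking the Schur algorithm is more machinery than is needed.

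Two points in your write-up are asserted rather than proved. First, the rank statement in (iv): knowing that the unique admissible multiplier is $k=\lambda z^{N-m}$ does not by itself give $\operatorname{rank}[T_\varphi^{\,*},T_\varphi]=N-m$; you need either the identity expressing $[T_\varphi^{\,*},T_\varphi]$ as $H_{\bar g}^{\,*}(I-\widetilde{T_k}\widetilde{T_k}^{\,*})H_{\bar g}$ together with an argument that conjugating the rank-$(N-m)$ defect by $H_{\bar g}$ loses nothing, or a direct computation on $\operatorname{span}\{1,z,\dots,z^{N-1}\}$; this is where \cite{FaLe} does genuine work. Second, (iii) is deferred entirely to \cite{CL1}. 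Note, though, that the \emph{first} half of (iii) already follows from your own triangular system: since $a_{-\ell}=0$ for $\ell>m$ and $a_N\neq0$, the equations force $k_0=\dots=k_{N-m-1}=0$, and the remaining equations determining $k_{N-m},\dots,k_{N-1}$ involve only $a_{-1},\dots,a_{-m}$ and $a_{N-m+1},\dots,a_N$; since hyponormality is equivalent (via Schur's theorem \cite{Schur}, exactly as in Proposition \ref{Zhutrig}) to the extendability of the prescribed jet $k_0,\dots,k_{N-1}$ to a function in the closed unit ball of $H^\infty$, it cannot depend on $a_0,\dots,a_{N-m}$. The rank-invariance half of (iii) still requires the Curto--Lee argument you gesture at.
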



\section{The Bergman Space Case}

By analogy with the case of the unit circle, let $L^{\infty }\equiv L^{\infty }(\mathbb D)$, $H^{\infty
}\equiv H^{\infty }(\mathbb D)$, $L^{2}\equiv L^{2}(\mathbb D)$ and $A^{2}\equiv
A^{2}(\mathbb D)$ denote the relevant spaces in the case of the unit disk $\mathbb{D}$. \ Similarly, let $P:L^{2}\rightarrow A^{2}$ denote the orthogonal projection onto the Bergman space. \ For $\varphi \in L^{\infty }$, the Toeplitz operator on the Bergman space with symbol $\varphi $ is 
$$T_{\varphi }:A^{2}(\mathbb D) \rightarrow A^{2}(\mathbb D),$$ given by 
$$
T_{\varphi }f:=P(\varphi f)\;\;\;(f\in A^{2}). 
$$

$T_{\varphi }$ is said to be \textit{analytic} if $\varphi \in H^{\infty }$. 

\subsection{A Revealing Example}

Let $$\varphi \equiv \bar{z}^2 + 2z.$$

On the Hardy space $H^2(\mathbb{T})$, $T_{\varphi}$ is not hyponormal, because $m=2$, $N=1$, and $m{>}N$ (see Proposition \ref{trigHardy}).

However, on the Bergman space $A^2(\mathbb D)$ $T_{\varphi}$ is hyponormal, as we now prove. \ Consider a slight variation of the symbol, that is, 

$$\varphi \equiv \bar{z}^2 + \alpha z.  \; \; \; (\alpha \in \mathbb C)$$.

Observe that 
$$
\left\langle [T_{\varphi}^*,T_{\varphi}]f,f\right\rangle=\left\langle \left|\alpha\right|^2 [T_{\bar{z}},T_{z}]+[T_{{z}^2},T_{\bar{z}^2}]f,f\right\rangle
$$

so that $T_{\varphi}$ is hyponormal if and only if
\begin{equation} \label{equation}
\left|\alpha \right|^2 \left\|zf\right\|^2+\left\langle P(\bar{z}^2f),\bar{z}^2f\right\rangle \ge \left|\alpha\right|^2 \left\langle P(\bar{z}f),\bar{z}f\right\rangle+\left\|z^2f\right\|^2
\end{equation}
for all $f \in A^2(\mathbb D)$. 

A calculation now shows that this happens precisely when $\left|\alpha\right| \ge 2$, as follows. \ For, given $f \in A^2(\mathbb{D}), f \equiv \sum_0^{\infty} b_n z^n$, one can apply Lemma \ref{basiclem} and obtain 
$$
\left\|zf\right\|^2=\sum_0^{\infty} \frac{\left|b_n\right|^2}{n+2},
$$
$$
\left\|P(\bar{z}f)\right\|^2=\sum_1^{\infty} \left|b_n\right|^2 \frac{n}{(n+1)^2},
$$
$$
\left\|z^2f\right\|^2=\sum_0^{\infty} \frac{\left|b_n\right|^2}{n+3},
$$
and
$$
\left\|P(\bar{z}^2f)\right\|^2=\sum_2^{\infty} \left|b_n\right|^2 \frac{n-1}{(n+1)^2}.
$$
Thus, (\ref{equation}) becomes
\begin{equation} \label{eq2}
\left|\alpha\right|^2 \sum_0^{\infty} \frac{\left|b_n\right|^2}{n+2}+\sum_2^{\infty} \left|b_n\right|^2 \frac{n-1}{(n+1)^2} \\
 \ge  \left|\alpha\right|^2 \sum_1^{\infty} \left|b_n\right|^2 \frac{n}{(n+1)^2}+\sum_0^{\infty} \frac{\left|b_n\right|^2}{n+3}.
\end{equation}
In short, equation (\ref{eq2}) must hold for every sequence $(b_n)$ of coefficients of $f$. \ Consider first a sequence $(b_n)$ with $b_0:=1$ and $b_n:=0$ for all $n\ge1$. \ By (\ref{equation}), we have $\left|\alpha\right|^2 \ge \frac{2}{3}$. \ Next, take $b_0:=0, \; b_1:=1$ and $b_n:=0$ for all $n\ge2$; then (\ref{equation}) yields $\left|\alpha\right|^2 \ge 3$. \ Finally, if we fix $k \ge 2$ and we use a sequence $(b_n)$ defined as $b_0:=0, \; b_1:=0, \; \cdots, \; b_{k-1}:=0, \; b_k:=1$ and $b_n:=0$ for all $n > k$, then (\ref{eq2}) becomes
$$
\frac{\left|\alpha\right|^2}{k+2}+\frac{k-1}{(k+1)^2} \ge \left|\alpha\right|^2 \frac{k}{(k+1)^2}+\frac{1}{k+3}.
$$ 
This immediately leads to the condition 
$$
\left|\alpha\right|^2 \ge 4 \cdot \frac{k+2}{k+3} \; \; \; (\textrm{all} \; k \ge 2);
$$
that is, $\left|\alpha\right|^2 \ge 4$. \ As a result, $T_{\varphi}$ is hyponormal if and only if $\left|\alpha\right| \ge 2$. \ It follows that $T_{\bar{z}^2 + 2z}$ is hyponormal.

\subsection{A Key Difference Between the Hardy and Bergman Cases}

\begin{lemma} \label{basiclem}

For $u,v\geq 0$, we have 
\[
P(\bar{z}^{u}z^{v})=\left\{ 
\begin{array}{ccc}
0 &  & v<u \\ 
\frac{(v-u+1)}{v+1}z^{v-u} &  & v\geq u%
\end{array}%
.\right. 
\]
\end{lemma}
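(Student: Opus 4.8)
The plan is to compute $P(\bar z^{u} z^{v})$ directly from the definition of the Bergman projection, reducing everything to the orthonormality structure of the monomials $\{z^{k}\}$ in $A^{2}(\mathbb{D})$. First I would recall that with respect to normalized area measure on $\mathbb{D}$, one has $\|z^{k}\|^{2} = \frac{1}{k+1}$ and the monomials are mutually orthogonal, so $\{\sqrt{k+1}\, z^{k}\}_{k\geq 0}$ is an orthonormal basis of $A^{2}(\mathbb{D})$. Consequently, for any $g \in L^{2}(\mathbb{D})$ the projection is given by $P g = \sum_{k\geq 0} (k+1)\langle g, z^{k}\rangle\, z^{k}$.

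Next I would apply this with $g = \bar z^{u} z^{v}$ and evaluate the inner products $\langle \bar z^{u} z^{v}, z^{k}\rangle$. Passing to polar coordinates $z = r e^{i\theta}$, the angular integral $\int_{0}^{2\pi} e^{i(v-u-k)\theta}\, d\theta$ vanishes unless $k = v - u$; in particular, if $v < u$ there is no nonnegative integer $k$ making this nonzero, which already yields the first case $P(\bar z^{u} z^{v}) = 0$. When $v \geq u$, only the single term $k = v-u$ survives, and a short radial integration $\int_{0}^{1} r^{2v+1}\, dr = \frac{1}{2v+2}$ (against the measure that gives total mass $1$) pins down the coefficient. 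Combining, $P(\bar z^{u} z^{v}) = (v-u+1)\,\langle \bar z^{u} z^{v}, z^{v-u}\rangle\, z^{v-u}$, and the remaining inner product is $\|z^{v}\|^{2} = \frac{1}{v+1}$, giving the stated coefficient $\frac{v-u+1}{v+1}$.

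There is no serious obstacle here; the only point requiring a bit of care is bookkeeping the normalization of the measure so that the constants come out as $\frac{1}{k+1}$ rather than $\frac{\pi}{k+1}$ or similar, and making sure the index shift $k = v-u$ is a legitimate basis index (i.e. $v \geq u$), which is exactly what separates the two cases. I would also note in passing that the formula is consistent with the already-used special computations in the Revealing Example (e.g. $P(\bar z^{2} z^{n}) = \frac{n-1}{n+1} z^{n-2}$ for $n \geq 2$), which serves as a sanity check.
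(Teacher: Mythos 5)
Your proposal is correct and follows essentially the same route as the paper: expand $P(\bar z^{u}z^{v})$ in the orthonormal basis $\{\sqrt{k+1}\,z^{k}\}$, observe that the only possibly nonzero coefficient occurs at $k=v-u$ (the paper sees this by rewriting $\langle \bar z^{u}z^{v},z^{k}\rangle=\langle z^{v},z^{u+k}\rangle$ rather than via the angular integral, but it is the same orthogonality fact), and evaluate that coefficient as $(v-u+1)\|z^{v}\|^{2}=\frac{v-u+1}{v+1}$. No gaps.
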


\begin{proof}
\begin{eqnarray*}
P(\bar{z}^{u}z^{v}) &=&\sum_{j=0}^{\infty }\left\langle \bar{z}^{u}z^{v},%
\frac{z^{j}}{\left\| z^{j}\right\| }\right\rangle \frac{z^{j}}{\left\|
z^{j}\right\| } \\
&=&\sum_{j=0}^{\infty }\frac{\left\langle \bar{z}^{u}z^{v},z^{j}\right%
\rangle z^{j}}{\left\| z^{j}\right\| ^{2}}=\sum_{j=0}^{\infty
}(j+1)\left\langle z^{v},z^{u+j}\right\rangle z^{j} \\
&=&\left\{ 
\begin{array}{ccc}
0 &  & v<u \\ 
\frac{v-u+1}{v+1}z^{v-u} &  & v\geq u%
\end{array}%
.\right.  
\end{eqnarray*}  \qed
\end{proof}

\begin{corollary}
\label{cor2}
For $v\geq u$ and $t\geq w$, we have 
\begin{eqnarray*}
\left\langle P(\bar{z}^{u}z^{v}),P(\bar{z}^{w}z^{t})\right\rangle
&=&\left\langle \frac{v-u+1}{v+1}z^{v-u},\frac{t-w+1}{t+1}%
z^{t-w}\right\rangle \\
&=&\frac{(t-w+1)}{(v+1)(t+1)}\delta _{u+t,v+w}.
\end{eqnarray*}
\end{corollary}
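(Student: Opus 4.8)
The statement is an immediate consequence of Lemma~\ref{basiclem} together with the elementary orthogonality of monomials in $A^{2}(\mathbb D)$, so the plan is short. Since $v\geq u$ and $t\geq w$, Lemma~\ref{basiclem} applies to each of the two arguments and yields
$$
P(\bar z^{u}z^{v})=\frac{v-u+1}{v+1}\,z^{v-u}\qquad\text{and}\qquad P(\bar z^{w}z^{t})=\frac{t-w+1}{t+1}\,z^{t-w}.
$$
Substituting these two identities into $\langle P(\bar z^{u}z^{v}),P(\bar z^{w}z^{t})\rangle$ gives at once the first displayed equality, and it then remains only to compute the inner product of two analytic monomials.

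For that computation I would invoke the normalization already used in the proof of Lemma~\ref{basiclem}, namely $\langle z^{a},z^{b}\rangle=\frac{1}{a+1}\,\delta_{a,b}$ in $A^{2}(\mathbb D)$. Pulling the (real, positive) scalars out of the sesquilinear form, one obtains
$$
\langle P(\bar z^{u}z^{v}),P(\bar z^{w}z^{t})\rangle=\frac{(v-u+1)(t-w+1)}{(v+1)(t+1)}\,\langle z^{v-u},z^{t-w}\rangle=\frac{(v-u+1)(t-w+1)}{(v+1)(t+1)}\cdot\frac{\delta_{v-u,\,t-w}}{v-u+1}.
$$
Cancelling the common factor $v-u+1$ leaves $\frac{t-w+1}{(v+1)(t+1)}\,\delta_{v-u,\,t-w}$, and since the condition $v-u=t-w$ is the same as $u+t=v+w$, we have $\delta_{v-u,\,t-w}=\delta_{u+t,\,v+w}$, which is exactly the asserted formula.

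I do not anticipate any real obstacle: once Lemma~\ref{basiclem} is in hand this is a one-line calculation. The only point worth a word of care is the apparent asymmetry of the final expression in the pairs $(u,v)$ and $(w,t)$ — the numerator displays $t-w+1$ but not $v-u+1$. This is harmless, because the right-hand side vanishes unless $v-u=t-w$, in which case the two quantities coincide; one could equally well write the numerator as $v-u+1$. It is convenient to keep this symmetry in mind, since the corollary will be applied to Gram-type matrices of images of monomials, where the symmetric form of the entries is the natural one.
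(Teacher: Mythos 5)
Your proposal is correct and is exactly the argument the paper intends: the corollary's two displayed equalities are themselves the proof, obtained by substituting Lemma~\ref{basiclem} into each argument and then using $\langle z^{a},z^{b}\rangle=\frac{1}{a+1}\delta_{a,b}$, just as you do. Your remark about the apparent asymmetry (the numerator could equally be $v-u+1$ since the expression vanishes unless $v-u=t-w$) is a correct and worthwhile observation, but the argument itself matches the paper's.
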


\subsection{Some Known Results}

In this subsection, we briefly summarize a number of partial results relating to the Bergman space case.

\begin{itemize}

\item (H. Sadraoui \cite{Sad}) \ If $\varphi \equiv \bar g +f$, the following are equivalent: \newline
(i) \ $T_{\varphi}$ is hyponormal on $A^2(\mathbb D)$; \newline
(ii) \ $H_{\bar g}^* H_{\bar g} \le H_{\bar f}^* H_{\bar f}$; \newline
(iii) \ $H_{\bar g} = C H_{\bar f}$, where $C$ is a contraction on $A^2(\mathbb D)$.   

\item (I.S. Hwang \cite{ISH}) \ Let $\varphi \equiv a_{-m} \bar z^m + a_{-N} \bar z^N + a_{m} z^m + a_{N} z^N \; \; (0<m<N)$ satisfying $a_m \bar{a}_N = \bar{a_{-m}}a_{-N}$, then $T_{\varphi}$ is hyponormal if and only if 
\begin{eqnarray*}
\frac{1}{N+1}(\left|a_N\right|^2-\left|a_{-N}\right|^2) & \ge & \frac{1}{m+1}(\left|a_{-m}\right|^2-\left|a_{m}\right|^2) \; \; (\text{if} \left|a_{-N}\right|\le \left|a_{N}\right|) \\
N^2(\left|a_{-N}\right|^2-\left|a_{N}\right|^2) & \le & m^2(\left|a_{m}\right|^2-\left|a_{-m}\right|^2) \; \; (\text{if} \left|a_{N}\right|\le \left|a_{-N}\right|).
\end{eqnarray*}
The last condition is not sufficient.

\item (I.S. Hwang \cite{ISH2}) \ Let $\varphi \equiv 4 \bar{z}^3+2 \bar{z}^2 +\bar{z} + z +2 z^2 + \beta z^3 \; \; (\left|\beta\right|=4)$. \ Then $T_{\varphi}$ is hyponormal if and only if $\beta=4$.

\item (I.S. Hwang \cite{ISH2}) \ Let 
$$
\varphi \equiv 8 \bar{z}^3+ \bar{z}^2 + \beta \bar{z} + \gamma z + 7 z^2 + 2 z^3 \; \; (\left|\beta\right|=\left|\gamma\right|).
$$
Then $T_{\varphi}$ is not hyponormal.

\item (P. Ahern and \v Z. \v Cu\v ckovi\' c \cite{AhCu1}) \ Let $\varphi \equiv \bar g +f \in L^{\infty}(\mathbb D)$, and assume that $T_{\varphi}$ is hyponormal. \ Then 
$$
Bu \ge u,
$$
where $B$ denotes the Berezin transform and $u:=\left|f\right|^2-\left|g\right|^2$.

\item (P. Ahern and \v Z. \v Cu\v ckovi\' c \cite{AhCu1}) \ Let $\varphi \equiv \bar g +f \in L^{\infty}(\mathbb D)$, and assume that $T_{\varphi}$ is hyponormal. \ Then $$
\bar{lim}_{z \rightarrow \zeta} (\left|f'(z)\right|^2 - \left|g'(z)\right|^2) \ge 0
$$
for all $\zeta \in \mathbb T$. \ In particular, if $f'$ and $g'$ are continuous at $\zeta \in \mathbb{T}$ then $\left|f'(\zeta)\right| \ge \left|g'(\zeta)\right|$.

\item (I.S. Hwang and J. Lee \cite{ISHJL}) \ The authors obtain some basic results on hyponormality of Toeplitz operators on weighted Bergman spaces.

\item (Y. Lu and C. Liu \cite{LuLiu}) \ The authors obtain necessary and sufficient conditions for the hyponormality of $T_{\varphi}$ in the case when $\varphi$ is a radial symbol.

\item (Y. Lu and Y. Shi \cite{LuShi}) \ The authors study the weighted Bergman space case, and prove the following result.
\begin{theorem}
(cf. \cite[Theorem 2.4(ii)]{LuShi}) \ Let $\varphi:=\alpha z^n + \beta z^m + \gamma \bar{z}^m + \delta \bar{z}^n$, with $n > m$. \ Then $m^2(\left| \beta \right| ^2 - \left| \gamma \right|^2) + n^2(\left| \alpha \right| ^2 - \left| \delta \right| ^2) \ge mn \left|\bar{\alpha}\beta-\bar{\gamma}\delta \right|$.
\end{theorem}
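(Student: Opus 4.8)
The plan is to extract the inequality from the positivity of the self-commutator $[T_{\varphi}^{*},T_{\varphi}]$ by testing it against the two-dimensional subspaces $\mathrm{span}\{z^{k},z^{\ell}\}$ of $A^{2}(\mathbb{D})$ for a suitably \emph{resonant} pair $k<\ell$, and then letting $k\to\infty$ to recover the sharp constant. Write $\varphi\equiv f+\bar g$ with $f=\alpha z^{n}+\beta z^{m}$ and $g=\bar\gamma z^{m}+\bar\delta z^{n}$; then $T_{\varphi}$ hyponormal means $\langle[T_{\varphi}^{*},T_{\varphi}]h,h\rangle\ge0$ for all $h\in A^{2}(\mathbb{D})$, so in particular the compression of $[T_{\varphi}^{*},T_{\varphi}]$ to any finite-dimensional subspace is positive semidefinite.

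First I would use Lemma~\ref{basiclem} to record, for $k$ large enough that every exponent below is nonnegative,
\[
T_{\varphi}z^{k}=\alpha z^{k+n}+\beta z^{k+m}+\gamma\,\tfrac{k-m+1}{k+1}z^{k-m}+\delta\,\tfrac{k-n+1}{k+1}z^{k-n},
\]
\[
T_{\varphi}^{*}z^{k}=\bar\alpha\,\tfrac{k-n+1}{k+1}z^{k-n}+\bar\beta\,\tfrac{k-m+1}{k+1}z^{k-m}+\bar\gamma z^{k+m}+\bar\delta z^{k+n}.
\]
Because distinct monomials are orthogonal with $\|z^{j}\|^{2}=(j+1)^{-1}$, the diagonal entry $\langle[T_{\varphi}^{*},T_{\varphi}]z^{k},z^{k}\rangle=\|T_{\varphi}z^{k}\|^{2}-\|T_{\varphi}^{*}z^{k}\|^{2}$ contains no cross terms, and using the elementary identity $\frac{1}{k+s+1}-\frac{k-s+1}{(k+1)^{2}}=\frac{s^{2}}{(k+s+1)(k+1)^{2}}$ it collapses to
\[
\langle[T_{\varphi}^{*},T_{\varphi}]z^{k},z^{k}\rangle=\frac{n^{2}(|\alpha|^{2}-|\delta|^{2})}{(k+1)^{2}(k+n+1)}+\frac{m^{2}(|\beta|^{2}-|\gamma|^{2})}{(k+1)^{2}(k+m+1)}.
\]
Nonnegativity of this for all large $k$ already yields $n^{2}(|\alpha|^{2}-|\delta|^{2})+m^{2}(|\beta|^{2}-|\gamma|^{2})\ge0$.

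The decisive step is the off-diagonal entry $\langle[T_{\varphi}^{*},T_{\varphi}]z^{k},z^{\ell}\rangle=\langle T_{\varphi}z^{k},T_{\varphi}z^{\ell}\rangle-\langle T_{\varphi}^{*}z^{k},T_{\varphi}^{*}z^{\ell}\rangle$: it vanishes unless $\ell-k$ matches a resonance of the symbol, and the right choice is $\ell:=k+(n-m)>k$, which makes the $z^{k+n}$ part of $T_{\varphi}z^{k}$ pair with the $z^{\ell+m}$ part of $T_{\varphi}z^{\ell}$ and the $z^{k-m}$ part pair with the $z^{\ell-n}$ part (and likewise for $T_{\varphi}^{*}$), so that both the $\bar\alpha\beta$ and the $\bar\gamma\delta$ contributions fall into the \emph{same} entry. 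Collecting them with Corollary~\ref{cor2} and simplifying via $\frac{1}{k+n+1}-\frac{k-m+1}{(k+1)(k+n-m+1)}=\frac{mn}{(k+1)(k+n+1)(k+n-m+1)}$ gives
\[
\langle[T_{\varphi}^{*},T_{\varphi}]z^{k},z^{\ell}\rangle=(\alpha\bar\beta-\gamma\bar\delta)\,\frac{mn}{(k+1)(k+n+1)(k+n-m+1)},
\]
a quantity of modulus $mn\,|\bar\alpha\beta-\bar\gamma\delta|$ divided by the same positive denominator. Now apply the Cauchy--Schwarz inequality for the positive semidefinite form $[T_{\varphi}^{*},T_{\varphi}]$,
\[
\big|\langle[T_{\varphi}^{*},T_{\varphi}]z^{k},z^{\ell}\rangle\big|^{2}\le\langle[T_{\varphi}^{*},T_{\varphi}]z^{k},z^{k}\rangle\,\langle[T_{\varphi}^{*},T_{\varphi}]z^{\ell},z^{\ell}\rangle,
\]
substitute the three closed forms, multiply through by $(k+1)^{6}$, and let $k\to\infty$ (so $\ell=k+n-m\sim k$). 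All subleading terms disappear and one is left with $m^{2}n^{2}|\bar\alpha\beta-\bar\gamma\delta|^{2}\le\big(n^{2}(|\alpha|^{2}-|\delta|^{2})+m^{2}(|\beta|^{2}-|\gamma|^{2})\big)^{2}$; since the base on the right is nonnegative, taking square roots gives the claim.

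The step I expect to be the main obstacle is the bookkeeping behind the last two displays: one must check that, for the resonant offset $\ell-k=n-m$, the only monomial coincidences between $T_{\varphi}z^{k}$ and $T_{\varphi}z^{\ell}$ (and between their adjoint images) that contribute are the two indicated ones, and that any further accidental coincidence --- which occurs only for special arithmetic relations such as $n=3m$ --- contributes \emph{equally} to $\langle T_{\varphi}z^{k},T_{\varphi}z^{\ell}\rangle$ and to $\langle T_{\varphi}^{*}z^{k},T_{\varphi}^{*}z^{\ell}\rangle$, hence cancels in the commutator. A secondary but essential point is verifying that the leading $(k+1)^{-6}$ coefficients on the two sides of the Cauchy--Schwarz bound are \emph{exactly} $m^{2}n^{2}|\bar\alpha\beta-\bar\gamma\delta|^{2}$ and $\big(n^{2}(|\alpha|^{2}-|\delta|^{2})+m^{2}(|\beta|^{2}-|\gamma|^{2})\big)^{2}$, with no hidden cancellation; this is exactly where the two rational-function identities above do the work, and it is also what makes the resulting inequality sharp.
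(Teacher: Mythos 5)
Your argument is correct, and the computations check out: the diagonal entry of the compressed self-commutator is exactly $\frac{n^{2}(|\alpha|^{2}-|\delta|^{2})}{(k+1)^{2}(k+n+1)}+\frac{m^{2}(|\beta|^{2}-|\gamma|^{2})}{(k+1)^{2}(k+m+1)}$, the resonant choice $\ell=k+(n-m)$ produces the off-diagonal entry $(\alpha\bar\beta-\gamma\bar\delta)\,\frac{mn}{(k+1)(k+n+1)(k+n-m+1)}$, the accidental coincidence at $n=3m$ does cancel between $\langle T_{\varphi}z^{k},T_{\varphi}z^{\ell}\rangle$ and $\langle T_{\varphi}^{*}z^{k},T_{\varphi}^{*}z^{\ell}\rangle$, and Cauchy--Schwarz plus the limit $k\to\infty$ delivers the stated bound. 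Be aware, however, that the paper does not prove this particular theorem at all --- it is quoted from Lu and Shi as a known result --- and the paper's own machinery is aimed at the strictly stronger inequality of Theorem \ref{mainthm}, which carries a factor $2$ on the right-hand side. Your proof is, in effect, the $2\times 2$ truncation of that machinery: the Cauchy--Schwarz step for the positive form is exactly the positivity of the $2\times 2$ compression $\bigl(\begin{smallmatrix} a & \rho\\ \bar\rho & a\end{smallmatrix}\bigr)$ with $a=n^{2}(|\alpha|^{2}-|\delta|^{2})+m^{2}(|\beta|^{2}-|\gamma|^{2})$ and $\rho=\bar\alpha\beta mn-\bar\gamma\delta mn$ (up to conjugation), which yields $a\ge|\rho|$. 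The paper instead tests against the longer vectors $z^{k}+c_{1}z^{k+g}+\cdots+c_{N}z^{k+Ng}$ with $g=n-m$, obtains in the limit the infinite tridiagonal Toeplitz matrix with diagonal $a$ and off-diagonal $\rho$, and uses that its spectrum is $[a-2|\rho|,\,a+2|\rho|]$ to conclude $a\ge 2|\rho|$. So your route is a clean, self-contained, and more elementary proof of the statement as posed, but it cannot be pushed to the sharp constant without enlarging the test subspace; if you extend your computation from pairs $\{z^{k},z^{k+g}\}$ to the full arithmetic progression of exponents, you recover the paper's main theorem.
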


\end{itemize}

\subsection{Hyponormality of Toeplitz Operators on the Bergman Space}

The self-commutator of $T_{\varphi}$ is  

$$C:=[T^*_{\varphi},T_{\varphi}]$$

We seek necessary and sufficient conditions on the symbol $\varphi$ to ensure that $C \ge 0$.

The following result gives a flavor of the type of calculations we face when trying to decipher the hyponormality of a Toeplitz operator acting on the Bergman space. \ Although the calculation therein will be superseded by the calculations in the following section, it serves both as a preliminary example and as  motivation for the organization of our work.
  
\begin{proposition}
Assume $k, \ell \geq \max\{a,b\}$. \ Then
\begin{eqnarray*}
&&\left\langle \left[ T_{\bar{z}^{a}},T_{z^{b}}\right] (z^{k}+cz^{\ell
}),z^{k}+cz^{\ell }\right\rangle  \\
&=&a^{2}\left[ \frac{1}{(k+1)^{2}(k+1+a)}+c^{2}\cdot \frac{1}{(\ell
+1)^{2}(\ell +1+a)}\right] \delta _{a,b} \\
&&+ac[\frac{k-\ell +a}{\left( a+k+1\right) \left( k+1\right) \left( \ell
+1\right) }\delta _{a+k,b+\ell } \\
&&+\frac{\ell -k+a}{\left( a+\ell +1\right) \left( k+1\right) \left( \ell
+1\right) }\delta _{a+\ell ,b+k}]
\end{eqnarray*}
\end{proposition}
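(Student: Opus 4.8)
The plan is to expand the commutator on the vector $g := z^k + c z^\ell$ directly (here, as the statement indicates, $c$ is taken real). Writing $[T_{\bar z^a}, T_{z^b}] = T_{\bar z^a}T_{z^b} - T_{z^b}T_{\bar z^a}$ and using that $T_{z^b}$ is multiplication by $z^b$, that $T_{z^b}^* = T_{\bar z^b}$ and $T_{\bar z^a}^* = T_{z^a}$, and that $T_{z^a}g = z^a g$ since $g$ is analytic, one obtains
\[
\langle [T_{\bar z^a}, T_{z^b}] g,\, g\rangle = \langle \bar z^a z^b g,\, g\rangle - \langle P(\bar z^a g),\, P(\bar z^b g)\rangle .
\]
The hypothesis $k,\ell \ge \max\{a,b\}$ guarantees, via Lemma \ref{basiclem}, that each of $P(\bar z^a z^k)$, $P(\bar z^a z^\ell)$, $P(\bar z^b z^k)$, $P(\bar z^b z^\ell)$ is a nonzero monomial, so no term disappears for an unexpected reason.

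First I would compute the two pieces on monomials. For the first piece, expand $g$ and use $\langle z^s, z^t\rangle = \delta_{st}/(s+1)$; this yields a diagonal contribution $\big(\frac{1}{a+k+1} + \frac{c^2}{a+\ell+1}\big)\delta_{a,b}$ together with two cross contributions $\frac{c}{a+k+1}\delta_{a+k,b+\ell}$ and $\frac{c}{a+\ell+1}\delta_{a+\ell,b+k}$, where the natural denominators $b+\ell+1$ and $b+k+1$ have been rewritten using whichever Kronecker delta is in force. For the second piece, expand $g$ and apply Corollary \ref{cor2} to the four resulting inner products; this produces $\big(\frac{k-a+1}{(k+1)^2} + c^2\frac{\ell-a+1}{(\ell+1)^2}\big)\delta_{a,b}$, plus cross terms which, after rewriting the numerators with the active delta, become $c\,\frac{\ell-a+1}{(k+1)(\ell+1)}\delta_{a+k,b+\ell}$ and $c\,\frac{k-a+1}{(k+1)(\ell+1)}\delta_{a+\ell,b+k}$.

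It then remains to subtract and simplify, one delta at a time. For the $\delta_{a,b}$ part, the factorization $(k+1-a)(k+1+a) = (k+1)^2 - a^2$ collapses $\frac{1}{a+k+1} - \frac{k-a+1}{(k+1)^2}$ to $\frac{a^2}{(a+k+1)(k+1)^2}$, and likewise with $\ell$ in place of $k$; this is exactly the $a^2[\cdots]\delta_{a,b}$ term in the statement. For each cross term, placing the two contributions over the common denominator $(a+k+1)(k+1)(\ell+1)$, respectively $(a+\ell+1)(k+1)(\ell+1)$, and expanding the numerator gives $ac\big[(k+1)(\ell+1) - (\ell-a+1)(a+k+1)\big] = ac\,(k-\ell+a)$, respectively $ac\,(\ell-k+a)$, which matches the claimed coefficients. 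I expect the one genuine difficulty to be the bookkeeping: one must track which Kronecker deltas can be simultaneously active (all three of them precisely when $a=b$ and $k=\ell$) and consistently use whichever is in force to recast the various $(s+1)$-type factors into the symmetric form displayed; once that is settled, the remaining algebra is a short computation.
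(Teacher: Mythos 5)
Your proposal is correct and takes essentially the same route as the paper's own proof: decompose the commutator as $\langle z^{b}g,z^{a}g\rangle-\langle P(\bar z^{a}g),P(\bar z^{b}g)\rangle$, evaluate the four monomial contributions of each piece via Lemma \ref{basiclem} and Corollary \ref{cor2}, rewrite numerators and denominators using whichever Kronecker delta is active, and simplify delta by delta with the factorization $(k+1)^2-(k+1-a)(k+1+a)=a^2$. No gaps.
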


\begin{proof}
Keeping in mind that $k, \ell \geq \max\{a,b\}$, we calculate the action of the commutator on the binomial $z^{k}+cz^{\ell}$.

\begin{eqnarray*}
&&\left\langle \left[ T_{\bar{z}^{a}},T_{z^{b}}\right] (z^{k}+cz^{\ell
}),z^{k}+cz^{\ell }\right\rangle  \\
&=&\left\langle T_{\bar{z}^{a}}T_{z^{b}}(z^{k}+cz^{\ell }),z^{k}+cz^{\ell
}\right\rangle -\left\langle T_{z^{b}}T_{\bar{z}^{a}}(z^{k}+cz^{\ell
}),z^{k}+cz^{\ell }\right\rangle  \\
&=&\left\langle z^{b+k}+cz^{b+\ell },z^{a+k}+cz^{a+\ell }\right\rangle
-\left\langle P(\bar{z}^{a}z^{k}+c\bar{z}^{a}z^{\ell }),P(\bar{z}^{b}z^{k}+c%
\bar{z}^{b}z^{\ell })\right\rangle  \\
&=&\frac{\delta _{a,b}}{a+k+1}+c\frac{\delta _{a+k,b+\ell }}{a+k+1}+c\frac{%
\delta _{a+\ell ,b+k}}{a+\ell +1}+c^{2}\frac{\delta _{a,b}}{a+\ell +1} \\
&&-\frac{(k-b+1)}{(k+1)^{2}}\delta _{a,b}-c\frac{(k-a+1)}{(k+1)(\ell +1)}%
\delta _{a+\ell ,b+k} \\
&&-c\frac{(k-b+1)}{(k+1)(\ell +1)}\delta _{a+k,b+\ell }-c^{2}\frac{(\ell
-b+1)}{(\ell +1)^{2}}\delta _{a,b} \\
&=&\left[ \frac{1}{a+k+1}+c^{2}\cdot \frac{1}{a+\ell +1}-\frac{(k-b+1)}{%
(k+1)^{2}}-c^{2}\cdot \frac{(\ell -b+1)}{(\ell +1)^{2}}\right] \delta _{a,b}
\\
&&+c\left[ \frac{1}{a+k+1}-\frac{(k-b+1)}{(k+1)(\ell +1)}\right] \delta
_{a+k,b+\ell } \\
&&+c\left[ \frac{1}{a+\ell +1}-\frac{(k-a+1)}{(k+1)(\ell +1)}\right] \delta
_{a+\ell ,b+k} \\
&=&\left[ \frac{(k+1)(b-a)+ab}{(k+1)^{2}(k+1+a)}+c^{2}\cdot \frac{(\ell
+1)(b-a)+ab}{(\ell +1)^{2}(\ell +1+a)}\right] \delta _{a,b} \\
&&+c\cdot \left[ \frac{1}{a+k+1}-\frac{(\ell -a+1)}{(k+1)(\ell +1)}\right]
\delta _{a+k,b+\ell } \\
&&+c\left[ \frac{1}{a+\ell +1}-\frac{(k-a+1)}{(k+1)(\ell +1)}\right] \delta
_{a+\ell ,b+k} \\ 
&=&\left[ \frac{a^{2}}{(k+1)^{2}(k+1+a)}+c^{2}\cdot \frac{a^{2}}{(\ell
+1)^{2}(\ell +1+a)}\right] \delta _{a,b} \\
&&(if \ a\neq b \ then \ the \ whole \ expression \ is \ 0) \\
&&+ac\cdot \frac{k-\ell +a}{\left( a+k+1\right) \left( k+1\right) \left(
\ell +1\right) }\delta _{a+k,b+\ell } \\
&&+ac\cdot \frac{\ell -k+a}{\left( a+\ell +1\right) \left( k+1\right) \left(
\ell +1\right) }\delta _{a+\ell ,b+k} 
\end{eqnarray*}
\begin{eqnarray*}
&=&a^{2}\left[ \frac{1}{(k+1)^{2}(k+1+a)}+c^{2}\cdot \frac{1}{(\ell
+1)^{2}(\ell +1+a)}\right] \delta _{a,b} \\
&&+ac[\frac{k-\ell +a}{\left( a+k+1\right) \left( k+1\right) \left( \ell
+1\right) }\delta _{a+k,b+\ell } \\
&&+\frac{\ell -k+a}{\left( a+\ell +1\right) \left( k+1\right) \left( \ell
+1\right) }\delta _{a+\ell ,b+k}],
\end{eqnarray*}
as desired.  \qed
\end{proof}

\begin{corollary} \label{cor26}
Assume $a=b$, $k, \ell \geq a$ and $k \ne \ell$. \ Then
\begin{eqnarray*}
&&\left\langle \left[ T_{\bar{z}^{a}},T_{z^{a}}\right] (z^{k}+cz^{\ell
}),z^{k}+cz^{\ell }\right\rangle  \\
&=&a^{2}\left[ \frac{1}{(k+1)^{2}(k+1+a)}+c^{2}\cdot \frac{1}{(\ell
+1)^{2}(\ell +1+a)}\right] .
\end{eqnarray*}
\end{corollary}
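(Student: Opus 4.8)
The plan is to obtain Corollary~\ref{cor26} as an immediate specialization of the preceding Proposition, so no new computation is really needed. First I would set $b=a$ in the established formula for $\left\langle \left[ T_{\bar{z}^{a}},T_{z^{b}}\right] (z^{k}+cz^{\ell}),z^{k}+cz^{\ell }\right\rangle$. The ``diagonal'' term carries the factor $\delta_{a,b}$, which becomes $\delta_{a,a}=1$, so that bracket contributes exactly $a^{2}\left[ \frac{1}{(k+1)^{2}(k+1+a)}+c^{2}\cdot \frac{1}{(\ell+1)^{2}(\ell+1+a)}\right]$, which is precisely the right-hand side claimed in the Corollary.

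Next I would dispose of the two ``off-diagonal'' terms. After substituting $b=a$, they are weighted by $\delta_{a+k,a+\ell}$ and $\delta_{a+\ell,a+k}$ respectively, and each of these Kronecker symbols equals $\delta_{k,\ell}$. Since the hypothesis of the Corollary includes $k\neq\ell$, both symbols vanish, so the entire $ac[\cdots]$ contribution drops out. Combining this with the previous paragraph gives the stated identity. The only bookkeeping point is that the Corollary's hypotheses $a=b$ and $k,\ell\geq a$ match exactly the Proposition's standing assumption $k,\ell\geq\max\{a,b\}=a$, so the Proposition applies verbatim; there is no genuine obstacle here, only the two elementary observations about the Kronecker deltas. (This special case is worth isolating because it is the version that will actually be fed into the asymptotic analysis of the inner-product matrix in the next section, where only the pure commutator $[T_{\bar z^a},T_{z^a}]$ on distinct monomials is needed.)
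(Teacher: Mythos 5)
Your proposal is correct and matches the paper's intent exactly: the Corollary is stated as an immediate specialization of the preceding Proposition, obtained by setting $b=a$ (so the diagonal term survives with $\delta_{a,a}=1$) and observing that $\delta_{a+k,a+\ell}=\delta_{a+\ell,a+k}=\delta_{k,\ell}=0$ since $k\neq\ell$. No further commentary is needed.
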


\subsection{Self-Commutators}

We focus on the action of the self-commutator $C$ of certain Toeplitz operators $T_{\varphi}$ on suitable vectors $f$ in the space $A^2(\mathbb{D})$. \ The symbol $\varphi$ and the vector $f$ are of the form 
$$
\varphi:=\alpha z^n+\beta z^m +\gamma \overline z ^p + \delta \overline z ^q \; \; (n>m; \; p<q)
$$ 
and 
$$
f:=z^k+c z^{\ell}+d z^r \; \; (k<\ell<r),
$$
respectively, with $\ell$ and $r$ to be determined later. \ We also assume that $n-m=q-p$. \ Our ultimate goal is to study the asymptotic behavior of this action as $k$ goes to infinity. \ Thus, we consider the expression $\left\langle Cf,f\right\rangle$, given by
$$
\left\langle [(T_{\alpha z^n+\beta z^m +\gamma \overline z ^p + \delta \overline z ^q})^*,T_{\alpha z^n+\beta z^m +\gamma \overline z ^p + \delta \overline z ^q}](z^k+c z^{\ell}+d z^r),z^k+c z^{\ell}+d z^r\right\rangle ,
$$
for large values of $k$ (and consequently large values of $\ell$ and $r$.) \ It is straightforward to see that $\left\langle Cf,f\right\rangle$ is a quadratic form in $c$ and $d$, that is, 

\begin{equation}
\label{quadraticform}
\left\langle Cf,f\right\rangle \equiv A_{00}+2 \; \text{Re} (A_{10} c) + 2 \; \text{Re} (A_{01} d) + A_{20}c \overline{c} + 2 \; \text{Re} (A_{11} 
\overline{c}d)+ A_{02} d \overline{d} .
\end{equation}

Alternatively, the matricial form of (\ref{quadraticform}) is
\begin{equation}
\label{matrixform}
\left\langle 
\left(
\begin{array}{lcr}
A_{00} & A_{10} & A_{01} \\
\overline A_{10} & A_{20} & A_{11} \\
\overline A_{01} & \overline A_{11} & A_{02}
\end{array}
\right)
\left(
\begin{array}{l}
1 \\
c \\
d
\end{array}
\right) ,
\left(
\begin{array}{l}
1 \\
c \\
d
\end{array}
\right)
\right\rangle .
\end{equation} 

We now observe that the coefficient $A_{00}$ arises from the action of $C$ on the monomial $z^k$, that is,
$$
A_{00} = \left\langle Cz^k,z^k \right\rangle \equiv \left\langle [(T_{\alpha z^n+\beta z^m +\gamma \overline z ^p + \delta \overline z ^q})^*,T_{\alpha z^n+\beta z^m +\gamma \overline z ^p + \delta \overline z ^q}]z^k,z^k \right\rangle .
$$
Similarly, 
$$
A_{10} = \left\langle Cz^{\ell},z^k \right\rangle ,
$$
\begin{equation} \label{a01}
A_{01} = \left\langle Cz^r,z^k \right\rangle ,
\end{equation}
$$
A_{20} = \left\langle Cz^{\ell},z^{\ell} \right\rangle ,
$$
\begin{equation} \label{a11}
A_{11} = \left\langle Cz^r,z^{\ell} \right\rangle ,
\end{equation}
$$
A_{02} = \left\langle Cz^r,z^r \right\rangle .
$$

To calculate $A_{00}$ explicitly, we first recall that the algebra of Toeplitz operators with analytic symbols is commutative, and therefore $T_{z^n}$ commutes with $T_{z^m}$, $T_{z^p}$ and $T_{z^q}$. \ 

We also recall that two monomials $z^u$ and $z^v$ are orthogonal whenever $u \ne v$. \ As a result, the only nonzero contributions to $A_{00}$ must come from the inner products $\left\langle [T_{z^n}^*,T_{z^n}]z^k,z^k \right\rangle$, $\left\langle [T_{z^m}^*,T_{z^m}]z^k,z^k \right\rangle$, $\left\langle [T_{z^p}^*,T_{z^p}]z^k,z^k \right\rangle$ and $\left\langle [T_{z^q}^*,T_{z^q}]z^k,z^k \right\rangle$ . \ 

Applying Corollary \ref{cor26} we see that
$$
A_{00}=\frac{1}{(k+1)^2}\left(\frac{\left|\alpha\right|^2n^2}{k+n+1}+\frac{\left|\beta\right|^2m^2}{k+m+1}-\frac{\left|\gamma\right|^2p^2}{k+p+1}-\frac{\left|\delta\right|^2q^2}{k+q+1}\right) .
$$

Similarly, 
\begin{eqnarray}
A_{10}&=&\overline{\alpha}\beta(\frac{1}{\ell+m+1}-\frac{k-m+1}{(k+1)(\ell+1)})\delta_{n+k,m+\ell} \nonumber \\
&&+\alpha\overline{\beta}(\frac{1}{\ell+n+1}-\frac{k-n+1}{(k+1)(\ell+1)})\delta_{m+k,n+\ell} \nonumber \\
&&-\overline{\gamma}\delta(\frac{1}{\ell+p+1}-\frac{k-p+1}{(k+1)(\ell+1)})\delta_{q+k,p+\ell} \nonumber \\
&&-\gamma\overline{\delta}(\frac{1}{\ell+q+1}-\frac{k-q+1}{(k+1)(\ell+1)})\delta_{p+k,q+\ell} \label{A10}.
\end{eqnarray}
Now recall that $m<n$ and $k<\ell$, so that $m+k<n+\ell$, and therefore $\delta_{m+k,n+\ell}$=0. \ Also, $p<q$ implies $p+k<q+\ell$, so that $\delta_{p+k,q+\ell}=0$. \ As a consequence, 
\begin{eqnarray}
A_{10}&=&\overline{\alpha}\beta(\frac{1}{\ell+m+1}-\frac{k-m+1}{(k+1)(\ell+1)})\delta_{n+k,m+\ell} \nonumber \\
&&-\overline{\gamma}\delta(\frac{1}{\ell+p+1}-\frac{k-p+1}{(k+1)(\ell+1)})\delta_{q+k,p+\ell} .
\end{eqnarray}

Consider now $A_{01}$, as described in (\ref{a01}). \ We wish to imitate the calculation for $A_{10}$. \ Observe that $k<r$, so that the vanishing of the relevant $\delta$'s in (\ref{A10}) still holds. \ Thus, we obtain
\begin{eqnarray} \label{A01}
A_{01}&=&\overline{\alpha}\beta(\frac{1}{r+m+1}-\frac{k-m+1}{(k+1)(r+1)})\delta_{n+k,m+r} \nonumber \\
&&-\overline{\gamma}\delta(\frac{1}{r+p+1}-\frac{k-p+1}{(k+1)(r+1)})\delta_{q+k,p+r} .
\end{eqnarray}
In short, $A_{01}$ can be obtained from $A_{10}$ by replacing $\ell$ by $r$. \ In a completely analogous way, we can calculate $A_{11}$, by replacing $\ell$ by $r$ and $k$ by $\ell$ in (\ref{A10}): 
\begin{eqnarray}
A_{11}&=&\overline{\alpha} \beta (\frac{1}{r+m+1}-\frac{\ell-m+1}{(\ell+1)(r+1)})\delta_{n+\ell,m+r} \nonumber \\
&&-\overline{\gamma} \delta (\frac{1}{r+p+1}-\frac{\ell-p+1}{(r+1)(\ell+1)})\delta_{q+\ell,p+r} .
\end{eqnarray}
Also, $A_{20}$ and $A_{02}$ follow the pattern of $A_{00}$:
\begin{eqnarray*}
A_{20}&=&\frac{1}{(\ell+1)^2}\left(\frac{\left|\alpha\right|^2n^2}{\ell+n+1}+\frac{\left|\beta\right|^2m^2}{\ell+m+1}-\frac{\left|\gamma\right|^2p^2}{\ell+p+1}-\frac{\left|\delta\right|^2q^2}{\ell+q+1}\right) ,
\end{eqnarray*}
and
\begin{eqnarray*}
A_{02}&=&\frac{1}{(r+1)^2}\left(\frac{\left|\alpha\right|^2n^2}{r+n+1}+\frac{\left|\beta\right|^2m^2}{r+m+1} -\frac{\left|\gamma\right|^2p^2}{r+p+1}-\frac{\left|\delta\right|^2q^2}{r+q+1}\right) .
\end{eqnarray*}

Recall again that $k<\ell<r$. \ We now make a judicious choice to simplify the forms of $A_{10}$, $A_{11}$ and $A_{01}$. \ That is, we let $\ell:=n+k-m$ and $r:=\ell+q-p$. \ It follows that $n+k=m+\ell<m+r$ and $q+k<q+\ell=p+r$. \ Therefore, both Kronecker deltas appearing in $A_{01}$ are zero, and thus $A_{01}=0$. \ Moreover, 
\begin{eqnarray} \label{newA10}
A_{10}&=&\overline{\alpha}\beta(\frac{1}{\ell+m+1}-\frac{k-m+1}{(k+1)(\ell+1)}) \\
&&-\overline{\gamma}\delta(\frac{1}{\ell+p+1}-\frac{k-p+1}{(k+1)(\ell+1)})\delta_{q+k,p+\ell} .  
\end{eqnarray}
and
\begin{eqnarray*}
A_{11}&=&\overline{\alpha}\beta (\frac{1}{r+m+1}-\frac{\ell-m+1}{(r+1)(\ell+1)})\delta_{n+\ell,m+r} \\
&&-\bar{\gamma} \delta (\frac{1}{r+p+1}-\frac{\ell-p+1}{(r+1)(\ell+1)}) .
\end{eqnarray*}
The $3 \times 3$ matrix associated with $C$ becomes
$$
M:=\left(
\begin{array}{lcr}
A_{00} & A_{10} & 0 \\
\overline A_{10} & A_{20} & A_{11} \\
0 & \overline A_{11} & A_{02}
\end{array}
\right) .
$$
We now wish to study the asymptotic behavior of $k^3 M$ as $k \rightarrow \infty$. \ Surprisingly, $k^3A_{00}$, $k^3A_{02}$ and $k^3A_{20}$ all have the same limit as $k \rightarrow \infty$. \ Also, $k^3A_{10}$ and $k^3\overline A_{11}$ have the same limit. \ To see this, observe that
$$
k^3A_{00}=\frac{k^2}{(k+1)^2}\left(\frac{k\left|\alpha\right|^2n^2}{k+n+1}+\frac{k\left|\beta\right|^2m^2}{k+m+1}-\frac{k\left|\gamma\right|^2p^2}{k+p+1}-\frac{k\left|\delta\right|^2q^2}{k+q+1}\right) ,
$$
so that 
$$
a:=\lim_{k \rightarrow \infty} k^3A_{00}=\left|\alpha\right|^2n^2 + \left|\beta\right|^2m^2 - \left|\gamma\right|^2p^2 - \left|\delta\right|^2q^2.
$$
Then $\lim_{k \rightarrow \infty} k^3A_{20}=\lim_{k \rightarrow \infty} k^3A_{02}=a$.
In terms of the remaining entries of $k^3M$, recall the assumption $n-m=q-p$, and let $g:=n-m=q-p$. \ It follows that $\ell=k+g$ and $r=\ell+g=k+2g$. \ By using these values in (\ref{newA10}), we obtain
$$
k^3A_{10}=\bar{\alpha} \beta \frac{k^3 mn}{(k+1)(k+g+1)(k+g+m+1)} - \bar{\gamma} \delta \frac{k^3 pq}{(k+1)(k+g+1)(k+g+p+1)} ,
$$
so that 
$$
\rho:=\lim_{k \rightarrow \infty} k^3A_{10}=\bar{\alpha} \beta mn - \bar{\gamma} \delta pq .
$$
The calculation for $A_{11}$ is entirely similar, and one gets $\lim_{k \rightarrow \infty} k^3 A_{11}=\rho$.

It follows that the asymptotic behavior of $k^3M$ is given by the tridiagonal matrix
$$
\left(
\begin{array}{lcr}
a & \rho & 0 \\
\bar{\rho} & a & \rho \\
0 & \bar{\rho} & a
\end{array}
\right) .
$$
 
Now, if instead of using a vector of the form
$$
f:=z^k+c z^{\ell}+d z^r \; \; (k<\ell<r),
$$
with $\ell=k+g$ and $r=\ell+g=k+2g$ (that is, a vector of the form
$$
f:=z^k+c z^{k+g}+d z^{k+2g}
$$
we were to use a longer vector with similar power structure,
$$
f_N:=z^k+c_1 z^{k+g}+c_2 z^{k+2g}+ \cdots + c_N z^{k+Ng},
$$
it is not hard to see that the asymptotic behavior of the associated matrix would still be given by the tridiagonal matrix with $a$ in the diagonal and $\rho$ in the super-diagonal. \ To see this, one only need to observe that the entries of the matrix $P$ associated with $\left\langle Cf,f \right\rangle$ will follow the same pattern as the entries in $M$. \ For example, when $N=3$ the $(3,4)$-entry of $P$ will follow the pattern of $A_{10}$ above, with $\ell$ and $k$ replaced by $k+3g$ and $k+2g$, resp. \ Similarly, the $(2,4)$-entry of $P$ will follow the pattern of $A_{01}$ above, with $r$ and $k$ replaced by $k+3g$ and $k+g$, resp. \ As a result, it is straightforward to see that, like $A_{01}$, the entry $P_{24}$ will be zero. \ As for $P_{34}$, one gets
\begin{eqnarray}
P_{34}&=&\overline{\alpha} \beta (\frac{1}{k+3g+m+1}-\frac{k+2g-m+1}{(k+2g+1)(k+3g+1)})\delta_{n+k+2g,m+k+3g} \nonumber \\
&&-\overline{\gamma} \delta (\frac{1}{k+3g+p+1}-\frac{k+2g-p+1}{(k+3g+1)(k+2g+1)})\delta_{q+k+2g,p+k+3g} . \nonumber \\
&& 
\end{eqnarray}
As before, 
\begin{eqnarray*}
k^3P_{34}&=&\bar{\alpha} \beta \frac{k^3 mn}{(k+2g+1)(k+3g+1)(k+3g+m+1)} \\
&&- \bar{\gamma} \delta \frac{k^3 pq}{(k+2g+1)(k+3g+1)(k+3g+p+1)} ,
\end{eqnarray*}
and once again, 
$$
\lim_{k \rightarrow \infty} k^3P_{34}=\bar{\alpha} \beta mn - \bar{\gamma} \delta pq = \rho.
$$

In summary, the hyponormality of $T_{\varphi}$, detected by the positivity of the self-commutator $C$, leads to the positive semi-definiteness of the tridiagonal $(N+1) \times (N+1)$ matrix $P$. \ Since this must be true for all $N \ge 1$, it follows that a necessary condition for the hyponormality of $T_{\varphi}$ is the positive semi-definiteness of the infinite tridiagonal matrix 
$$
Q:=\left(
\begin{array}{lccr}
a & \rho & 0 & \cdots \\
\bar{\rho} & a & \rho & \cdots \\
0 & \bar{\rho} & a & \cdots \\
\vdots & \vdots & \vdots & \ddots 
\end{array}
\right) .
$$

We now consider the spectral behavior of $Q$ as an operator on $\ell^2(\mathbb{Z}_+)$.   

\begin{lemma}

For $a \in \mathbb{R}$ and $\rho \in \mathbb{C}$, the spectrum of the infinite tridiagonal matrix $Q$ is $[a-2\left|\rho\right|,a+2\left|\rho\right|]$.
\end{lemma}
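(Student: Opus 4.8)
The plan is to recognize $Q$ as a bounded self-adjoint operator on $\ell^2(\mathbb{Z}_+)$ of the form $Q = aI + \bar\rho S + \rho S^*$, where $S$ is the unilateral shift $S e_j = e_{j+1}$. Since $a$ is real and $(\bar\rho S)^* = \rho S^*$, the operator $Q$ is self-adjoint, and translation invariance of the spectrum reduces the problem to computing $\sigma(\bar\rho S + \rho S^*)$. Writing $\rho = |\rho|\,e^{i\omega}$ and conjugating by the diagonal unitary $D := \mathrm{diag}(1, e^{i\omega}, e^{2i\omega}, \dots)$, a direct computation gives $D(\bar\rho S + \rho S^*)D^* = |\rho|\,(S + S^*)$; hence it suffices to prove $\sigma(S + S^*) = [-2,2]$, from which $\sigma(Q) = a + |\rho|\,[-2,2] = [\,a - 2|\rho|,\, a + 2|\rho|\,]$ follows at once.

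The inclusion $\sigma(S+S^*) \subseteq [-2,2]$ is immediate: $S+S^*$ is self-adjoint with $\|S+S^*\| \le \|S\| + \|S^*\| = 2$, so its spectrum lies in $[-\|S+S^*\|,\|S+S^*\|] \subseteq [-2,2]$.

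For the reverse inclusion I would produce Weyl sequences (approximate eigenvectors) built from Chebyshev polynomials of the second kind. Fix $\lambda \in (-2,2)$ and write $\lambda = 2\cos\theta$ with $\theta \in (0,\pi)$. Set $v^{(N)} := \sum_{j=0}^{N} \sin((j+1)\theta)\, e_j$. The three-term recurrence $\sin(j\theta) + \sin((j+2)\theta) = 2\cos\theta\,\sin((j+1)\theta)$ shows that $(S + S^* - \lambda)v^{(N)}$ is supported only on the indices $N$ and $N+1$, with entries of modulus $|\sin((N+2)\theta)|$ and $|\sin((N+1)\theta)|$, respectively; hence $\|(S+S^*-\lambda)v^{(N)}\|^2 \le 2$. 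On the other hand $\|v^{(N)}\|^2 = \sum_{i=1}^{N+1}\sin^2(i\theta) = \tfrac{N+1}{2} - \tfrac12\sum_{i=1}^{N+1}\cos(2i\theta) \to \infty$ as $N \to \infty$, since for $\theta \in (0,\pi)$ the cosine sum is bounded by $1/|\sin\theta|$. Therefore $\|(S+S^* - \lambda)v^{(N)}\|/\|v^{(N)}\| \to 0$, so $\lambda \in \sigma(S+S^*)$; as the spectrum is closed, $[-2,2] \subseteq \sigma(S+S^*)$, completing the proof.

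The main obstacle is precisely this lower bound $[-2,2] \subseteq \sigma(S+S^*)$: the truncated vectors $v^{(N)}$ are not eigenvectors, and the argument hinges on the bookkeeping that the truncation error stays $O(1)$ in norm while $\|v^{(N)}\|$ grows like $\sqrt{N}$. An alternative route avoids the explicit Weyl sequences: embedding $\ell^2(\mathbb{Z}_+)$ into $\ell^2(\mathbb{Z})$ realizes $S+S^*$ (on the positive half) together with a unitarily equivalent copy on the negative half as a finite-rank perturbation of $V + V^*$, where $V$ is the bilateral shift; since $V+V^*$ is unitarily equivalent via the Fourier transform to multiplication by $2\cos t$ on $L^2(\mathbb{T})$, one gets $\sigma_{\mathrm{ess}}(S+S^*) = \sigma_{\mathrm{ess}}(V+V^*) = [-2,2] \subseteq \sigma(S+S^*)$. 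I would mention this as a remark but carry out the elementary Weyl-sequence computation in full.
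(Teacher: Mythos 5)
Your proof is correct, and it takes a genuinely different route from the paper. The paper identifies $Q$ as the matrix of the Hardy-space Toeplitz operator with symbol $a+2\operatorname{Re}(\bar\rho z)$ and then simply invokes the known description of the spectrum of a Toeplitz operator with (real) harmonic symbol --- essentially the Hartman--Wintner theorem --- to conclude that the spectrum is $a+2[-|\rho|,|\rho|]$; it is a two-line citation-style argument. You instead give a self-contained operator-theoretic proof: writing $Q=aI+\bar\rho S+\rho S^*$, gauging away the phase of $\rho$ with a diagonal unitary, bounding $\|S+S^*\|\le 2$ for one inclusion, and building explicit Weyl sequences $v^{(N)}=\sum_{j=0}^N\sin((j+1)\theta)e_j$ from the Chebyshev recurrence for the other. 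I checked the bookkeeping: the residual $(S+S^*-\lambda)v^{(N)}$ is indeed supported on coordinates $N$ and $N+1$ with entries $-\sin((N+2)\theta)$ and $\sin((N+1)\theta)$ (the $e_0$ coordinate cancels because $\sin(2\theta)=2\cos\theta\sin\theta$), so the error is $O(1)$ while $\|v^{(N)}\|^2\sim(N+1)/2$ by the Dirichlet-kernel bound on $\sum\cos(2i\theta)$; closedness of the spectrum then captures the endpoints. The paper's approach is shorter and places the lemma in the context the authors care about (Toeplitz operators with harmonic symbols), but it leans on a nontrivial external theorem; your argument is longer but elementary and fully verifiable from first principles, and your closing remark correctly notes the standard alternative via the essential spectrum of the bilateral shift.
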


\begin{proof}
This result is well known; we present a proof for the sake of completeness. \ Observe that $Q$ is the canonical matrix representation of the Toeplitz operator on $H^2(\mathbb{T})$ with symbol $\varphi(z):=a+2 \textrm{Re} (\bar{\rho}z)$. \ Since the symbol is harmonic, it follows that the spectrum of $T_{\varphi} \equiv aI+T_{\bar{\rho}z+\rho \bar{z}}$ is the set $a+2 \textrm{Re} ({\left\{\bar{\rho}z: \; z \in \mathbb{D} \right\}}^{-}) = a+2[-\left|\rho\right|,\left|\rho\right|]$, as desired. \qed
\end{proof}
 
As a consequence, if $Q$ is positive (as an operator on $\ell^2(\mathbb Z_+)$), then 
$$
a \ge 2\left|\rho\right|.
$$ 

\subsection{Main Result}

\begin{theorem} \label{mainthm}
Assume that $T_{\varphi}$ is hyponormal on $A^2(\mathbb D)$, with
$$
\varphi:=\alpha z^n+\beta z^m +\gamma \overline z ^p + \delta \overline z ^q \; \; (n>m; \; p<q).
$$
Assume also that $n-m=q-p$. \ Then
\begin{equation} \label{eqthm}
\left|\alpha \right|^2 n^2 + \left|\beta \right|^2 m^2 - \left|\gamma \right|^2 p^2 - \left|\delta \right|^2 q^2  \ge 2 \left|\bar \alpha \beta m n - \bar \gamma \delta p q \right|.
\end{equation}
\end{theorem}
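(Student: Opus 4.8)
The plan is to read the inequality (\ref{eqthm}) off of the asymptotic analysis already assembled above, the guiding principle being that positive semi-definiteness is a closed condition which survives both the scaling by $k^{3}$ and the passage to the limit $k\to\infty$. Set $g:=n-m=q-p$ and, for each $N\ge1$, use the test vector $f_{N}:=z^{k}+c_{1}z^{k+g}+c_{2}z^{k+2g}+\cdots+c_{N}z^{k+Ng}$. Exactly as in the computation of the matrix $M$, we have $\langle Cf_{N},f_{N}\rangle=\langle P^{(k)}v,v\rangle$ with $v:=(1,c_{1},\dots,c_{N})^{T}$ and $P^{(k)}$ the Hermitian $(N+1)\times(N+1)$ matrix of the quadratic form. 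I would first check that, because the exponents $k<k+g<\cdots<k+Ng$ are spaced exactly $g$ apart while the symbol has ``reach'' $g$ (recall $n-m=q-p=g$), every Kronecker delta attached to an entry $P^{(k)}_{ij}$ with $|i-j|\ge2$ can be satisfied only when $i=j-1$ and hence vanishes; thus $P^{(k)}$ is genuinely tridiagonal. Its diagonal entries follow the pattern of $A_{00}$ with $k$ replaced by $k+(i-1)g$, so $k^{3}P^{(k)}_{ii}\to a$; its super-diagonal entries follow the pattern of $A_{10}$ (equivalently $A_{11}$, or $P_{34}$) with $k$ replaced by $k+(i-1)g$ and $\ell$ by $k+ig$, so $k^{3}P^{(k)}_{i,i+1}\to\rho$, where $a:=|\alpha|^{2}n^{2}+|\beta|^{2}m^{2}-|\gamma|^{2}p^{2}-|\delta|^{2}q^{2}$ and $\rho:=\bar\alpha\beta mn-\bar\gamma\delta pq$.

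Now I invoke hyponormality. Since $C=[T_{\varphi}^{*},T_{\varphi}]\ge0$, we have $\langle Cf_{N},f_{N}\rangle\ge0$ for all $c_{1},\dots,c_{N}\in\mathbb{C}$, i.e.\ $P^{(k)}\ge0$, and therefore $k^{3}P^{(k)}\ge0$ for every $k$. Because the cone of positive semi-definite matrices is closed, letting $k\to\infty$ gives $Q_{N}\ge0$, where $Q_{N}$ is the $(N+1)\times(N+1)$ tridiagonal Hermitian matrix with $a$ on the main diagonal and $\rho$ on the super-diagonal. As this holds for every $N$, the infinite tridiagonal matrix $Q$ is a positive operator on $\ell^{2}(\mathbb{Z}_{+})$: for a finitely supported $w\in\ell^{2}(\mathbb{Z}_{+})$ one has $\langle Qw,w\rangle=\langle Q_{N}w,w\rangle\ge0$ as soon as $N$ exceeds the support of $w$, and such $w$ are dense, so positivity extends to all of $\ell^{2}(\mathbb{Z}_{+})$ by boundedness of $Q$. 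By the preceding Lemma the spectrum of $Q$ equals $[a-2|\rho|,a+2|\rho|]$, so $Q\ge0$ forces $a-2|\rho|\ge0$. Substituting the values of $a$ and $\rho$ yields precisely (\ref{eqthm}).

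The main obstacle is the uniform bookkeeping in the first step rather than any isolated computation: one must verify, simultaneously for all $N$ and all entry indices, that the exponent choice $k+ig$ makes exactly the correct Kronecker deltas survive, that no anomalous small-index behaviour disturbs the tridiagonal pattern, and that each surviving rational expression decays like a fixed constant times $k^{-3}$, with the constant independent of the position along the diagonal. The excerpt verifies this for $N=1$ (the matrix $M$) and for the representative entry $P_{34}$ when $N=3$; the general case is the same calculation with $k$ replaced by $k+(i-1)g$, the point being that a shift of the exponents by a multiple of $g$ changes none of the limits. Everything afterwards---closedness of the PSD cone, the density argument on $\ell^{2}(\mathbb{Z}_{+})$, and the spectral Lemma---is soft and immediate.
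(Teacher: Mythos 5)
Your proposal is correct and follows essentially the same route as the paper: the test vectors $f_N$, the tridiagonal limit matrix $Q$ obtained via $k^3 P^{(k)}$ and closedness of the positive semi-definite cone, and the spectral lemma identifying $\sigma(Q)=[a-2|\rho|,a+2|\rho|]$ to extract $a\ge 2|\rho|$. The bookkeeping point you flag (that the exponent spacing $g=n-m=q-p$ forces all entries with $|i-j|\ge 2$ to vanish and makes every surviving entry converge to $a$ or $\rho$ after scaling by $k^3$) is exactly the verification the paper carries out for $M$ and for the representative entries $P_{24}$ and $P_{34}$.
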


\subsection{A Specific Case}

When $p=m$ and $q=n$ in
$$
\varphi:=\alpha z^n+\beta z^m +\gamma \overline z ^p + \delta \overline z ^q \; \; (n>m; \; p<q).
$$
the inequality
$$
\left|\alpha \right|^2 n^2 + \left|\beta \right|^2 m^2 - \left|\gamma \right|^2 p^2 - \left|\delta \right|^2 q^2  \ge 2 \left|\bar \alpha \beta m n - \bar \gamma \delta p q \right|.
$$
reduces to
$$
n^2(\left|\alpha \right|^2 - \left|\delta \right|^2) + m^2 (\left|\beta \right|^2 - \left|\gamma \right|^2) \ge 2 mn \left|\bar \alpha \beta - \bar \gamma \delta \right|.
$$
This not only generalizes previous estimates, but also sharpens them, since previous results did not include the factor $2$ in the right-hand side.

\section{When is $T_{\varphi}$ normal?}

We conclude this paper with a description of those symbols $\varphi$ in Theorem \ref{mainthm} which produce a normal operator $T_{\varphi}$. \ We first recall a result of S. Axler and \v Z. \v Cu\v ckovi\' c.

\begin{lemma} (\cite{AxCu}) \label{normal} \ Let $\varphi$ be harmonic and bounded on $\mathbb{D}$. \ Then $T_{\varphi}$ is normal if and only if there exist a pair of complex numbers $a$ and $b$ such that $(a,b) \ne (0,0)$ and $F:=a \varphi + b \bar{\varphi}$ is constant on $\mathbb{D}$.
\end{lemma}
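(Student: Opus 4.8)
The plan is to treat the two implications separately. The ``if'' direction is immediate: if $a\varphi+b\bar\varphi\equiv\kappa$ is constant with $(a,b)\neq(0,0)$, then either $b=0$, so $\varphi$ is constant and $T_\varphi$ is a scalar, or $b\neq 0$, in which case $T_\varphi^*=T_{\bar\varphi}=\tfrac{\kappa}{b}I-\tfrac{a}{b}T_\varphi$ (using $\varphi\in L^\infty$) is an affine function of $T_\varphi$ and hence commutes with it; either way $T_\varphi$ is normal.

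For the ``only if'' direction, write $\varphi=f+\bar g$ with $f,g$ holomorphic on $\mathbb D$, normalized by $f(0)=g(0)=0$ (adding a constant to $\varphi$ changes neither normality nor the asserted condition). Since analytic Toeplitz operators commute, so do $T_{\bar f}$ and $T_{\bar g}$; expanding $[T_\varphi^*,T_\varphi]=[T_{\bar f}+T_g,\,T_f+T_{\bar g}]$ annihilates the two mixed commutators and leaves $[T_\varphi^*,T_\varphi]=[T_f^*,T_f]-[T_g^*,T_g]$ (which is $H_{\bar f}^*H_{\bar f}-H_{\bar g}^*H_{\bar g}$ in the notation of \cite{Sad}). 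Writing $k_w$ for the reproducing kernel of $A^2(\mathbb D)$, $\hat k_w:=k_w/\|k_w\|$, and using $T_f\hat k_w=f\hat k_w$ together with $P(\bar f k_w)=\overline{f(w)}k_w$, one gets $\langle[T_f^*,T_f]\hat k_w,\hat k_w\rangle=\|f\hat k_w\|^2-|f(w)|^2=B(|f|^2)(w)-|f(w)|^2$, and likewise for $g$. Pairing the operator identity with $\hat k_w$, the normality of $T_\varphi$ (i.e.\ $[T_\varphi^*,T_\varphi]=0$) yields
$$
B(|f|^2)(w)-|f(w)|^2=B(|g|^2)(w)-|g(w)|^2\qquad(w\in\mathbb D),
$$
that is, $B(u)=u$ with $u:=|f|^2-|g|^2$.

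It remains to extract from $B(u)=u$ that $f=\lambda g$ for some unimodular $\lambda$ (the degenerate case being $f\equiv g\equiv0$). Since $\varphi\in L^\infty$ makes $f,g$ lie in the Hardy space $H^2$, the function $u$ lies in $L^1(\mathbb D)$, and by the classical fact that such a fixed point of the Berezin transform is harmonic, $0=\Delta u=4(|f'|^2-|g'|^2)$, so $|f'|\equiv|g'|$ on $\mathbb D$. Where $g'\neq0$ the holomorphic function $f'/g'$ has constant modulus $1$ and is therefore a unimodular constant $\lambda$, so $f'=\lambda g'$ on $\mathbb D$ by analyticity (and if $g'\equiv0$ then $f'\equiv0$, forcing $f\equiv g\equiv0$). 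Together with $f(0)=g(0)=0$ this gives $f=\lambda g$. Taking $(a,b):=(1,-\lambda)\neq(0,0)$ we get $a\varphi+b\bar\varphi=(\lambda g+\bar g)-\lambda(\bar\lambda\bar g+g)=(1-|\lambda|^2)\bar g=0$, a constant; undoing the normalization shifts $a\varphi+b\bar\varphi$ by a constant only, and the constant-$\varphi$ case is handled by $(a,b)=(1,0)$.

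I expect the decisive difficulty to be this last step: converting the scalar identity $B(u)=u$ into the rigid relation $f=\lambda g$. The harmonicity of the fixed point $u$ must in any case be justified in the function class at hand; alternatively, avoiding that theorem, one can argue from the automorphism-invariance $v_{h\circ\psi}=v_h\circ\psi$ of $v_h:=B(|h|^2)-|h|^2$ by comparing the Taylor coefficients of $v_f$ and $v_g$ at the origin, which produces for every $s\geq0$ the equality of the positively weighted ``lag-$s$'' correlations of the coefficients of $f=\sum a_k z^k$ and $g=\sum b_k z^k$, hence $|a_k|=|b_k|$ (from $s=0$) and then alignment of the phases across the common support, again yielding $f=\lambda g$. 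The preliminary reduction — the collapse of $[T_\varphi^*,T_\varphi]$ to $[T_f^*,T_f]-[T_g^*,T_g]$ and the Berezin computation — is routine.
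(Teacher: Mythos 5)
The paper does not actually prove this lemma: it is quoted verbatim from Axler--\v Cu\v ckovi\'c \cite{AxCu}, so there is no internal proof to compare against. Judged on its own, your argument is correct and is essentially a reconstruction of the known proof. The ``if'' direction via $T_\varphi^*=T_{\bar\varphi}=\tfrac{\kappa}{b}I-\tfrac{a}{b}T_\varphi$ is fine; the collapse of the self-commutator to $[T_f^*,T_f]-[T_g^*,T_g]$ and the reproducing-kernel computation giving $B(|f|^2)-|f|^2=B(|g|^2)-|g|^2$ are correct (and in fact safer if you pair $[T_\varphi^*,T_\varphi]$ itself with $\hat k_w$ and watch the cross terms $2\,\mathrm{Re}(f(w)g(w))$ cancel, since $T_f$ alone need not be bounded when only $\varphi=f+\bar g$ is bounded; also, $f,g\in A^2(\mathbb{D})$ already follows from $f=P\varphi-\overline{g(0)}$, which is all you need for $u\in L^1(dA)$). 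You correctly identify the one genuinely nontrivial input: that an $L^1(\mathbb{D},dA)$ fixed point of the Berezin transform is harmonic. For the disk this is a true theorem (the invariant mean value property result of Ahern--Flores--Rudin), so citing it closes the argument; but it is a substantial external result, it postdates \cite{AxCu}, and it is dimension-sensitive in general, so for the special function $u=|f|^2-|g|^2$ the elementary coefficient-matching route you sketch (which is closer to what Axler and \v Cu\v ckovi\'c actually do) is the more self-contained option. The endgame $|f'|\equiv|g'|\Rightarrow f'=\lambda g'$ with $|\lambda|=1$ on the connected set $\mathbb{D}\setminus Z(g')$, hence $f=\lambda g$ and $\varphi-\lambda\bar\varphi$ constant, is correct. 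In short: a valid independent proof of a result the paper merely cites, with the Berezin-fixed-point-implies-harmonic step being the only place where you lean on an unproved (but available) theorem.
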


Assume now that $T_{\varphi}$ is normal. \ By Lemma \ref{normal}, there exist $a$ and $b$ such that $(a,b) \ne (0,0)$ and $F:=a \varphi + b \bar{\varphi}$ is constant. \ In what follows, we write a harmonic symbol as $\varphi \equiv f+\bar{g}$, with $f$ and $g$ analytic. \ A straightforward calculation using 
$\frac{\partial}{\partial z}$ and $\frac{\partial}{\partial \bar{z}}$, applied to $F$, shows that $(\left|a\right|^2-\left|b\right|^2)\frac{\partial f}{\partial z}=0$. \ If $f$ is constant, a similar calculation shows that $g$ is also constant, and a fortiori $\varphi$ is constant. \ Thus, without loss of generality, we can assume that $f$ is not constant, and therefore $\left|a\right|=\left|b\right|>0$. \ If we write $a=\left|a\right|e^{i\theta}$ and $b=\left|a\right|e^{i\eta}$, it is not hard to see that $\varphi + e^{i(\eta-\theta)}\bar{\varphi}$ is constant on $\mathbb{D}$. \ Let $\lambda:= e^{{i}(\eta-\theta)}$, so that $\left| \lambda \right| =1$. \ We conclude that $\varphi+\lambda \overline{\varphi}$ is constant on $\mathbb{D}$.      

\begin{theorem} \label{normalthm} 
Let $\varphi \equiv \alpha z^n + \beta z^m + \gamma \bar{z}^p + \delta \bar{z}^q$, with $n<m$, $p<q$, $n-m=q-p$ and $(\alpha,\beta,\gamma,\delta) \ne (0,0,0,0)$. \ Then $T_{\varphi}$ is normal if and only if there exists $\lambda \in \mathbb{T}$ such that $\varphi$ is of one of exactly three types: \newline
(i) $\varphi=\alpha z^n - \lambda \bar{\alpha} \bar{z}^n$ (when $n=p$); \newline
(ii) $\varphi=\alpha z^n +\beta z^m - \lambda (\bar{\alpha} \bar{z}^n+\bar{\beta} \bar{z}^m)$ (when $m=p$); or \newline
(iii) $\varphi=\beta z^m - \lambda \bar{\beta} \bar{z}^m$ (when $m=q$).  
\end{theorem}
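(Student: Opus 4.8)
The plan is to leverage the reduction carried out in the paragraph preceding the statement, together with Lemma~\ref{normal}. Under the hypotheses of Theorem~\ref{normalthm} the symbol $\varphi$ is not constant (all exponents are positive and $(\alpha,\beta,\gamma,\delta)\ne(0,0,0,0)$), so that discussion shows $T_{\varphi}$ is normal if and only if there exists $\lambda\in\mathbb{T}$ with $\varphi+\lambda\bar{\varphi}$ constant on $\mathbb{D}$; the converse implication is immediate from Lemma~\ref{normal} applied with $(a,b)=(1,\lambda)\ne(0,0)$. Hence the theorem is equivalent to the purely algebraic assertion that $\varphi+\lambda\bar{\varphi}$ is constant for some unimodular $\lambda$ exactly when $\varphi$ has one of the three displayed forms.

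First I would note that $\varphi+\lambda\bar{\varphi}$ is a harmonic polynomial whose holomorphic part is
\[
h_{\lambda}(z):=\alpha z^{n}+\beta z^{m}+\lambda\bar{\gamma}\,z^{p}+\lambda\bar{\delta}\,z^{q},
\]
and whose anti-holomorphic part is $\lambda\,\overline{h_{\lambda}}$; since a harmonic polynomial is constant precisely when its holomorphic part is a constant polynomial, $T_{\varphi}$ is normal if and only if, for some $\lambda\in\mathbb{T}$, every positive-degree coefficient of $h_{\lambda}$ vanishes. So the problem becomes: when can the four monomials $\alpha z^{n}$, $\beta z^{m}$, $\lambda\bar{\gamma}z^{p}$, $\lambda\bar{\delta}z^{q}$ cancel out?

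The core is then a finite case analysis, organized by the number of nonzero coefficients among $\alpha,\beta,\gamma,\delta$ (the all-zero case being excluded), using that distinct monomials are linearly independent, that $n\ne m$ and $p\ne q$, and, crucially, the gap relation $n-m=q-p$. If all four coefficients are nonzero, the four positive-degree monomials of $h_{\lambda}$ must cancel in two pairs; because $n\ne m$ and $p\ne q$, the only candidate pairings are $\{n,p\}$ with $\{m,q\}$ and $\{n,q\}$ with $\{m,p\}$, and the gap relation forces $p=q$ in the first, eliminating it, so $n=q$ and $m=p$. Solving $\alpha+\lambda\bar{\delta}=0$ and $\beta+\lambda\bar{\gamma}=0$, and using $|\lambda|=1$ to write $\delta=-\lambda\bar{\alpha}$, $\gamma=-\lambda\bar{\beta}$, we land on form~(ii). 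If exactly two coefficients are nonzero, the two subcases in which $h_{\lambda}$ is purely analytic or purely anti-analytic are discarded (a nonconstant (co)analytic polynomial cannot be constant), while each of the remaining pairs forces a single exponent coincidence and a single coefficient equation; invoking the gap relation, these collapse — as functions on $\mathbb{D}$ — onto form~(i) (the surviving $z$-power is $z^{n}$, forcing $n=p$) or form~(iii) (the surviving $z$-power is $z^{m}$, forcing $m=q$). The ``exactly one'' and ``exactly three'' configurations admit no nonconstant solution, again by linear independence and the gap relation. To finish, one checks the converse and the exclusivity: substituting each of (i), (ii), (iii) into $\varphi+\lambda\bar{\varphi}$ yields $0$, hence normality by Lemma~\ref{normal}; and the three forms are pairwise incompatible under the stated exponent relations, so exactly one of them occurs.

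I expect the main obstacle to be the bookkeeping of this case analysis — not its depth but its completeness: one must be certain that every degenerate configuration (some of $\alpha,\beta,\gamma,\delta$ zero) maps to one of the three named forms and does not manufacture a spurious fourth possibility, and that the parenthetical exponent coincidences ($n=p$, $m=p$, $m=q$) are the correct ones in each branch. This is exactly where the hypothesis $n-m=q-p$ earns its keep: it kills the crossed pairing in the generic branch and pins down which exponents must merge in the degenerate ones. I would organize the write-up accordingly, by decreasing number of nonzero coefficients, recalling throughout that $n,m,p,q\ge 1$ so that $\varphi$ carries no constant term — harmless, since a constant term affects neither normality nor the classification.
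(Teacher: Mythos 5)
Your proposal is correct and follows essentially the same route as the paper: reduce normality to the existence of $\lambda\in\mathbb{T}$ with $\varphi+\lambda\bar{\varphi}$ constant (via Lemma~\ref{normal} and the preceding discussion), then classify the possible cancellations among the monomials $z^n,z^m,z^p,z^q$ using the gap relation $n-m=q-p$. The paper organizes the case analysis by which exponents coincide ($n=p$, $m=p$, $m=q$) rather than by how many coefficients vanish, which is slightly cleaner since degenerate coefficient configurations are then absorbed automatically, but the substance is identical.
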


\begin{proof}
($\Longrightarrow$) \ Assume that $T_{\varphi}$ is normal. \ From the discussion in the paragraph immediately preceding Theorem \ref{normalthm}, we can always assume that $\varphi+\lambda \overline{\varphi}$ is constant on $\mathbb{D}$, for some $\lambda \in \mathbb{T}$. \ Since $\varphi$ is clearly nonconstant, we know that 
$G:=\varphi+\lambda \overline{\varphi}$ is a constant trigonometric polynomial, with analytic monomials $z^m$, $z^n$, $z^p$ and $z^q$. \ Since $\varphi$ is a nonconstant harmonic function, in the above mentioned list of four monomials we must necessarily have at least two identical monomials. \ Since $m<n$, $p<q$ and $n-m=q-p$, we are led to consider the following three cases: \newline
(i) $n=p$ (and therefore $m<n=p<q$); here 
$$
G=\beta z^m+(\alpha+\lambda \bar{\gamma})z^n+\lambda \bar{\delta}\bar{z}^q+\lambda \overline{\beta z^m+(\alpha+\lambda \bar{\gamma})z^n+\lambda \bar{\delta}z^q},
$$
from which it easily follows that $\beta=0$, $\gamma=-\lambda \bar{\alpha}$ and $\delta=0$. \ Then $\varphi=\alpha z^n-\lambda \overline{\alpha z^n}$, as desired. \newline
(ii) $m=p$ (and therefore $m=p<q=n$); here 
$$
G=(\alpha + \lambda \bar{\delta}) z^n+(\beta + \lambda \bar{\gamma})z^m + \lambda \overline{(\alpha + \lambda \bar{\delta}) z^n +(\beta + \lambda \bar{\gamma})z^m},
$$
so that $\alpha+\lambda \bar{\delta}=0$ and $\beta+\lambda \bar{\gamma}=0$. It readily follows that $\delta = - \lambda \bar{\alpha}$ and $\gamma = - \lambda \bar{\beta}$. \ We then get $\varphi=\alpha z^n+\beta z^m - \lambda \overline{\alpha z^n+\beta z^m}$, as desired. \newline
(iii) $m=q$, which leads to $\varphi=\beta z^m-\lambda \overline{\beta z^m}.$ \newline
($\Longleftarrow$) \ For the converse, observe that in each of the three representations we have $\varphi+\lambda \bar{\varphi}=0$, which implies $T_{\varphi}^*=-\bar{\lambda} T_{\varphi}$. \ Therefore, $T_{\varphi}^*$ commutes with $T_{\varphi}$, so $T_{\varphi}$ is normal. \newline
The proof is now complete. \qed
\end{proof}

\begin{remark}
The form of (i), (ii) and (iii) in Theorem \ref{normalthm} is entirely consistent with Theorem \ref{mainthm}. \ For instance, consider case (i); here $\beta=\delta=0$ and $\gamma=-\lambda \overline{\alpha}$, so that both sides of (\ref{eqthm}) are equal to $0$.
\end{remark} 


\end{document}